\newtheorem{theorem} {\noindent \bf{Theorem}}
\newtheorem{lemma}{\noindent \bf{Lemma}}
\newtheorem{proposition}{\noindent \bf{Proposition}}
\newenvironment{proof}[1][Proof]{\noindent \textbf{#1.} }{\ \rule{0.5em}{0.5em}}
\newcounter{remno} \setcounter{remno}{0}
\newcommand\aser{a^s}
\newcommand\aad{a^d}
\newcommand\piser{\pi^s}
\newcommand\piad{\pi^d}
\newcommand\Aser{A^s}
\newcommand\Aad{A^d}
\newcommand\aadh{\hat{a}^d}
\newcommand\act{a}
\newcommand\Aset{{\cal A}}
\newcommand\Ex{{\bf{E}}}
\begin{document}
\title{The Value of Service Rate Flexibility in an $M/M/1$ Queue with
  Admission Control
\footnote{Research supported by the Greek Secretariat of Research and Technology, via a Greece-Turkey bilateral research program. }
}
\author{Yiannis Dimitrakopoulos\\
              Department of Informatics \\
              Athens University of Economics and Business \\
              76 Patission Str., 10434 Athens, Greece \\
              {dimgiannhs@aueb.gr}           
         \and
        Apostolos Burnetas\\
                Department of Mathematics\\
                University of Athens\\
                Panepistemiopolis, 157 84 Athens, Greece\\
                {aburnetas@math.uoa.gr}
}
\date{\today}

\maketitle

\begin{abstract}
We consider a single server queueing system with admission control and the possibility to switch dynamically between $k$ increasing service rate values with service cost rate being convex in service rate. We explore the benefit due to service rate flexibility on the optimal profit and the admission thresholds, when service payment is made upon customer's admission. We formulate a Markov Decision Process model for the problem of joint admission and service control considering both discounted and average expected profit maximization, and show that the optimal policy has a threshold structure for both controls. Regarding the benefit due to flexibility, we show that it is increasing in system congestion, and that its effect on the admission policy is to increase the admission threshold. We also derive a simple approximate condition between the admission reward and the relative cost of service rate increase, so that the service rate flexibility is beneficial. We finally show that the results extend to the corresponding model where service payment is made at the end of each service completion and differences on the benefit due to service flexibility with respect to the original model are pursued numerically.
\end{abstract}

\section{Introduction}
\label{sec-Introduction}

Admission control is a queue management tool that can increase the
efficiency of resource utilization in many service systems. Depending
on the particular application, it can be used to preserve system
capacity for future customers who bring higher profit, to limit the
number of admitted customers in order to provide a better quality of
service to those already in, etc. Admission control is often employed
indirectly via dynamic pricing, or by price discrimination (direct or
indirect) among different customer types. However in several
situations frequent price changes may not be feasible, and in order to
contain congestion, denying service to certain arriving customers may
be necessary.

On the other hand, in several queueing systems it is possible to
alleviate the congestion effects by adjusting the service
capacity. For example in banks or call centers the number of servers
may change during the day to follow variations of the arrival
rate. The service rate may also be varied dynamically when the queue
length becomes too long. Increasing (and in some cases decreasing) the
service rate comes at a cost, but it has the advantage over admission
control policies that fewer or no customers are turned away. It is
thus of interest to explore to what extent a flexibility in service
capacity can interact with admission control and when it can alleviate
its effects.

In this paper we investigate the interaction of service rate
flexibility and admission control in an M/M/1 queueing system, where the
flexibility is modeled as the option to switch the service rate dynamically based on the congestion. We analyze the problem of joint admission
and service rate control by formulating a Markovian Decision Process
model to maximize the infinite horizon expected discounted and average profits.
We explore the effect of service rate flexibility on the optimal
profit and the admission thresholds.  Specifically, the benefit of the
service rate switch option is compared against a baseline case where
only the lowest service rate is used and admission control is employed.
Thus, in principle, the benefit is due both to the existence of
higher service rates as well as the flexibility to use them. On the other
hand, higher service rates are not free but come at higher costs,
thus the benefit of using them reflects the tradeoff between serving at a
faster rate and paying higher operational costs. When we use the term
``benefit of flexibility'' we mean exactly how this tradeoff manifests
itself in the presence of admission control.

Specifically, we consider a simple admission-service control model
with a finite number of available service rates, a fixed service reward per customer, holding costs
that are increasing and convex in the number of customers and service cost rates that are increasing and convex
in the service rate. We first show that the optimal policy has a
threshold structure for both service and admission controls. We define the value of flexibility as the expected profit increase in the optimal
admission control subproblem when the dynamic service rate change
option becomes available, and show that this value is nondecreasing with
the level of congestion. Furthermore, the effect on the
optimal policy is to increase the admission threshold. We derive
a simple sufficient condition between the admission reward and the service costs
under which the service flexibility brings no benefit. Finally, we show that
these results are valid when the service reward is obtained before or after service completion.

The single server model studied in this paper is more relevant in a make-to-order production setting, as opposed to a service provider system
in which the service rate adjustment is made by varying the number of servers. A make-to-order production firm may be forced to apply admission control to incoming orders, if it has to pay high penalties for delayed deliveries. In such cases, if there exist a possibility to increase the production rate at times of high congestion, the need for order rejections may be alleviated. Depending on the structure of the production process, the capacity increase could be achieved either by increasing the machine load, if the production equipment has speed flexibility, or by increasing the number of shifts in a day.

Motivated by such a framework, the assumptions of convex holding cost is realistic because it reflects constant or increasing marginal penalties for order delays. It is also reasonable to assume service cost rates that are convex in the service rate, because maintenance costs may increase disproportionately with a high production speed, or because additional shifts require higher overtime payment rates.

The paper develops a model of dynamic optimization of queueing
systems, a large area with very extensive literature. Both admission
and service control models have been studied thoroughly.
\cite{stidhamsurvey} and \cite{walrand} survey several dynamic
optimization models developed for queueing control.

For models with a single class of customers, as the one analyzed in this paper, admission
control makes sense when there is an exogenous holding cost rate
function. A simple model in this direction was first presented in
\cite{naor}, where arriving customers are admitted or not based on the
observed queue length with the objective to maximize the overall customer 
benefit from receiving a reward for service completion minus a linear increasing holding cost per unit time of
delay. It is shown that a socially optimal policy admits fewer
customers than those who would decide to enter based on an individual
optimality criterion. \cite{stidham1985} considers a $GI/M/1$ queue
under infinite horizon discounted cost, assuming a convex and
nondecreasing holding cost rate function. It is shown that the
optimal policy has a threshold structure if and only if the optimal
benefit is concave in the number of customers in the system which in
turn depends on convexity of the holding cost rate function. As in
\cite{stidham1985}, we also consider a convex, nondecreasing holding
cost rate which implies a threshold property of the optimal admission
policy.

On the other hand, in multi-class systems with finite capacity, 
admission control may be useful even in the absence of holding costs,
because in this case admitting a customer implies the possibility of a
loss of profit from a future higher class customer. \cite{miller}
considers a system with $n$ parallel and identical servers, no waiting
room and $m$ customer classes which contribute to the system
different fixed rewards. This model results in a threshold-type
optimal policy with a preferred class. \cite{lippross} analyze the
optimal admission rule for a system with one server and no waiting
room which receives offers from customers according to a joint service
time and reward probability distribution. \cite{munoz}
and \cite{ormeciburnetas2} also investigate properties of
optimal admission policies for certain loss systems. \cite{munoz}
develop an optimal static admission policy in an $M/G/c/c$
queueing system with $k$ customer classes with generally different service requirements and service rewards. \cite{ormeciburnetas2} examine the problem of dynamic admission control in a two class loss Markovian queueing system with
different service rates and different fixed rewards for the two
customer classes.

The admission control problem has been also analyzed in queueing
systems under heavy traffic. In this framework,
the dynamic optimization is usually approximated by a diffusion
control problem, following the approach of \cite{harrison1988}.
Recent works in this area include \cite{ward_kumar2008} and
\cite{kocaga_ward2010}, both analyzing admission control under
customer abandonments. \cite{ward_kumar2008} analyze a $GI/G/1$ queue
in the conventional heavy traffic regime, where the optimal control
depends on the sample path of the diffusion and the resulting
asymptotically admission control policy of threshold type depends on
second moment data of the interarrival and service
times. On the other hand, \cite{kocaga_ward2010} consider the long run average
cost minimization problem of a multi-server system
with a single arrival and a single server class in the Halfin-Whitt heavy traffic regime.

Dynamic service control in queueing systems is an equally large
field. Several problems can be viewed as service control models,
including controlled server vacations, server allocation policies in
polling systems, etc. In an early work, \cite{crabill1974} examines
dynamic service control under infinite horizon expected average
expected cost in a maintenance system with finite available service
rates, a linear holding cost rate and a reward collected in service
completions. It is shown that the optimal service rate is increasing
in the number of customers waiting in line. The monotonicity of the
optimal service rate is also shown in \cite{lippman1975a} in the
framework of an $M/M/1$ queue, with service rates varying in a closed
set and the holding cost rate increasing and convex.
\cite{georgeharrison} consider the service control problem in an
$M/M/1$ queue where service rates are dynamically selected from a
close subset of $[0,\infty]$, under no switching cost, state-dependent
holding cost and rate dependent service cost. They develop an
asymptotic method for computing the optimal policy under average cost
minimization by solving a sequence of approximating problems, each
involving a truncation of the holding cost function. They prove that
the optimal policies of the approximating problems converge
monotonically to the optimal policy of the original problem and derive
an implementable policy and a performance bound at each iteration. In
our model we also derive a monotonicity property of the service
control component of the problem, under a convexity assumption on the
holding cost function. In the works mentioned above there are no
switching costs for changing the service rate. We refer to
\cite{luserfozo}, \cite{Hipp} and \cite{Kitaevserfozo} for models that
include service rate switching costs, resulting in hysteretic
policies.

There are several works that consider and analyze the joint control problem of admitting or rejecting an incoming customer and
varying the service rate. Most of them are motivated by and extend the work of \cite{georgeharrison}.
For instance, in the area of asymptotic analysis in the heavy traffic regime considering diffusion control problems we refer to \cite{ghosh_weerasinghe2007} and
\cite{ghosh_weerasinghe2010}. \cite{ghosh_weerasinghe2007} examine a
queueing network where a central planner dynamically selects the
service rate and buffer size that minimize the long-run average
expected cost. The optimal policy is derived from the solution of a
Brownian control problem and it consists of a feedback-type drift
control and a threshold type admission policy.
\cite{ghosh_weerasinghe2010} consider a Markovian system with customer
abandonments and address the infinite horizon discounted problem.
In contrast to  \cite{ghosh_weerasinghe2007}, it is  proved that the optimal
joint dynamic policy derived from the solution of the Brownian control
problem is asymptotically optimal for the original problem. A common feature of these papers as well as the admission control problem in \cite{kocaga_ward2010} is that uniformization is not applicable, because the transition rates are generally unbounded.

More relevant to our work are \cite{ata} and \cite{hasenbein2010}, since they also consider
the joint control problem in a simple setting of an $M/M/1$ queue under an average reward/cost criterion.
More specifically, \cite{ata} consider the
joint admission and service control problem in an M/M/1 queue with
adjustable arrival and service rates, under long-run average welfare
maximization. They also formulate and solve an associated dynamic
pricing problem. They show that the optimal arrival and service rates
are monotone in the system length. However the optimal prices, which
are set to induce the optimal arrival and service rates, are not
necessarily monotone. Finally, they find that dynamic policies can
result in significantly higher profits compared to static policies.
Similarly to \cite{ata}, \cite{hasenbein2010} develop an efficient
iterative method for computing the optimal policy under an average
cost criterion, providing a computable upper bound on the optimality
gap at each iteration step. It is also shown that service rates are
monotone increasing in the system state.

Although the model considered by \cite{hasenbein2010} can be seen as more general than ours since they assume a continuum action set for service decisions, our simpler setting contributes to the research area in several ways. First, we analyze the discounted expected profit maximization in the infinite horizon
and show that the optimal policy converges to the corresponding long run average optimal policy, under easily verifiable sufficient conditions on the cost functions. Moreover, we characterize the optimal policy as threshold-based and derive a simple intuitive condition which orders the admission and service thresholds. This ordering is helpful, because it allows identifying which of the available service rates are not useful. The question on which of the service rates are actually beneficial in a given problem motivated us to extend the discussion on \cite{ata} and \cite{hasenbein2010} by introducing the service rate flexibility, in order to examine the effect of the switch option on the admission policy and the optimal profit.

The rest of the paper is organized as follows. In Section
\ref{sec-Model} we define the joint control model, show several
properties of the value function and establish the threshold structure
of the optimal policy.  In Section \ref{sec-VoSF} we analyze the value
of service flexibility and the effect of the high service rate switch
option on the admission policy. In Section \ref{sec-Average} we
analyze the problem under the average reward criterion.  In Section
\ref{Rdepart} we consider a variation of the original model, in which
the service reward is collected at departure epochs and show that the
main results still hold.  In Section \ref{sec-Numerical} we present a
set of computational experiments exploring the sensitivity in the
system parameters. Section \ref{sec-Conclusions} concludes.

\section{Model Description}
\label{sec-Model}
We consider a single server Markovian
queue under the FCFS discipline, where customers arrive according to
a Poisson process with rate $\lambda$. The service rate may be
dynamically switched among $k$ available values $\mu_1<\mu_2<\dots<\mu_k$ without any cost.
The service provider receives a fixed
reward $R$ per customer admitted, and incurs holding and
service costs as follows. The holding cost is equal to $h(x)$ per
unit time, where $x$ is the number of customers in the system. The
function $h(x)$ is assumed to be increasing and convex. The service cost
is equal to $c_j$ per unit time, for using the service rate
$\mu_j$, respectively, for $j=1,\ldots,k$. We assume that
\begin{equation}\label{convcost} \frac{c_j-c_{j-1}}{\mu_j-\mu_{j-1}}\leq\frac{c_{j+1}-c_j}{\mu_{j+1}-\mu_j},\;\hbox{for }j=2,\ldots,k-1,\end{equation}
which corresponds to the service cost rate being convex in the service rate. We let $\zeta_j=\frac{c_j-c_{j-1}}{\mu_j-\mu_{j-1}}$ for $j=2,\ldots,k$, with $\zeta_1=0$ and $\zeta_{k+1}=+\infty$ and the convexity assumption for service cost rate can be rewritten as $\zeta_1\leq\zeta_2\leq\ldots\zeta_{k+1}$. Finally, we assume that the server is not allowed to close down when the system is empty. It
is obvious that in this state the optimal service rate is the lowest, and, thus, cost rate $c_1$ is incurred.

Since the system is Markovian, it suffices to assume that the system
manager makes a decision at both arrival and departure epochs. Service
rate decisions can be made at both arrival and departure epochs,
whereas admission decisions are made only at arrival epochs. Assuming
continuous time discounting at rate $\beta>0$, the service provider's
objective is to maximize the infinite horizon expected discounted net
profit. Thus, the problem can be framed as a continuous time Markov
Decision Process as follows.

Let $T_j$ be the time of the $j^{th}$ arrival, $X(t)$ a random
variable denoting the number of customers in the system at time $t$
and $I(t)={\bf{1}}(t=T_j\;\mbox{for some}\;j)$ the indicator of the
event that $t$ is an arrival epoch. We define the state vector as the
pair $(X(t),I(t))$, thus the state space is $S=\mbox{I\!N}_0 \times
\{0,1\}$. State $(0,0)$ denotes an empty system.

For the action sets, let $\Aser(t) \in \{1,2,\dots,j,\ldots,k\}$ denote the service
rate employed at time $t$, where $j$ stands for service rate $\mu_j$, respectively, and $\Aad(T_j)\in \{0,1\}$ the admission decision at the $j^{\hbox{th}}$
arrival epoch, where $0,1$ denote rejection and admission, respectively.
 In states $(X(t),I(t))=(x,1)$ corresponding to arrival
epochs, the action is defined by the pair $a(t)=(\aad(t),\aser(t))$,
thus the action set is $\Aset(x,1)=\{0,1\}\times\{1,2,\ldots,k\}$. On the other
hand, in states $(X(t),I(t))=(x,0)$ corresponding to departure
epochs, the action is defined only by $\aser(t)$, thus the action
set is $\Aset(x,0)=\{1,2,\ldots,k\}$.
Finally, let $\Pi$ be the space of history dependent
policies and $\upsilon^{\pi}_{\beta}(x,i)$ denote the infinite
horizon expected $\beta-$discounted net profit with initial state
$(x,i)\in S$
\begin{eqnarray*}
  \lefteqn{\upsilon^{\pi}_{\beta}(x,i)=} \\
  & & \Ex^{\pi} \left [ \sum_{j=0}^{\infty}e^{-\beta T_j} R\ {\bf{1}}(\Aad(T_j)=1) \right .\\
  & & \left . - \int_{0}^{\infty}e^{-\beta t}[h(X(t))+c(\Aser(t))]dt | X(0)=x, I(0)=i \vphantom{\sum_{j=0}^{\infty}} \right ].
\end{eqnarray*}
The optimal value function is
\begin{equation}\label{CTMDP}
\upsilon_{\beta}(x,i)=\sup_{\pi\in\Pi}\upsilon^{\pi}_{\beta}(x,i),
\end{equation}
and a policy $\pi^*$ is optimal if $\upsilon^{\pi^*}_{\beta} = \upsilon_{\beta}$.

\subsection{An Equivalent Model in Discrete Time}\label{subsec-DTMDP}
We can construct a discrete-time version of the Markovian Decision
problem as follows. Depending on the state and the action employed,
the transition rate out of any state can take values $\lambda$,
or $\lambda+\mu_j$, for some $j=1,\ldots,k$. Let $\Lambda=\lambda+\mu_k$
denote the maximum transition rate out of any state. Using standard
uniformization arguments (see Section 11.5 of \cite{puterman}), it
follows that the model described in \eqref{CTMDP} is equivalent to a
model where the transition rates are all equal to $\Lambda$ and the
transition probabilities are appropriately modified.  Since in the
original continuous time model the transition rates out of a state are
generally different in different states, the discrete time formulation
allows for transitions from a state back to itself so that the expected sojourn times are equal in the two models. These are referred to as fictitious transitions.

Via this transformation, the problem can be written in a form
equivalent to a discrete time discounted Markov Decision Process, as
follows
\begin{eqnarray}
  \upsilon(x,1)&=& \max \left \{ R + \upsilon(x+1,0), \upsilon(x,0)\right \},\;x\geq0\label{DTMDPu1} \\
  \nonumber\upsilon(x,0)&=&\frac{\Lambda}{\Lambda+\beta}
  \left
  \{ -\frac{h(x)}{\Lambda}
  + \frac{\lambda}{\Lambda} \upsilon(x,1)
  \right.
  \\
  && \left. + \max_{j=1,\ldots,k} \{ -\frac{c_j}{\Lambda}+\frac{\mu_j}{\Lambda}\upsilon(x-1,0)+
  \frac{\mu_k-\mu_j} {\Lambda}\upsilon(x,0)\}
  \right
  \},\;x>0\label{DTMDPu2}\\
  \upsilon(0,0)&=&\frac{\Lambda}{\Lambda+\beta} \left \{-\frac{h(0)}{\Lambda}-\frac{c_1}{\Lambda}+ \frac{\lambda}{\Lambda}\upsilon(0,1)+\frac{\mu_k}{\Lambda}\upsilon(0,0) \right \},\label{DTMDPu3}
\end{eqnarray}
where for simplicity we omit the subscript
$\beta$.

Note that in the discrete time formulation, the equivalent discount
factor per transition is equal to $\alpha=\frac{\Lambda}{\Lambda+\beta}$ and for $\beta >0$ it has the
standard property $0 < \alpha <1$. For ease of the exposition, we
normalize the time scale so that $\Lambda+\beta=1$. This normalization is without loss of generality. Indeed, if $\Lambda+\beta\neq1$ we can make the following transformation on the parameters:
$\tilde{\lambda}=\frac{\lambda}{\Lambda+\beta},\;\tilde{\mu}_j=\frac{\mu_j}{\Lambda+\beta}$ and $\tilde{c}_j=\frac{c_j}{\Lambda+\beta}$, for $j=1,\ldots,k$, $\tilde{h}(x)=\frac{h(x)}{\Lambda+\beta}$, $\tilde{R}=R$ and, finally, $\tilde{\beta}=\frac{\beta}{\Lambda+\beta}$. Under this transformation, it follows that $\tilde{\Lambda}+\tilde{\beta}=1$, which implies that the value function and, thus the optimal policy derived by the system of optimality equations in \eqref{DTMDPu1} - \eqref{DTMDPu3} are identical with and without the normalization. Note that in Section \ref{sec-VoSF} where we consider the criterion of
average reward per unit-time as a limit of the discounted reward
problem when $\beta \to 0$ and thus $\alpha \to 1$, we do not make
this normalization assumption.

The finite horizon version of this last model is the
following, where $\upsilon_n(x,i)$ denotes the optimal discounted
profit for the remaining $n$ transitions starting at state $(x,i)$.
\begin{eqnarray}
\upsilon_{n+1}(x,1)&=& \max\{R+\upsilon_{n+1}(x+1,0),\upsilon_{n+1}(x,0)\},\;x\geq0,\label{DTMDP1} \\
\nonumber\upsilon_{n+1}(x,0)&=&-h(x)+\lambda\upsilon_{n}(x,1)\\
&&+\max_{j=1,\ldots,k}\{-c_j+\mu_j\upsilon_{n}(x-1,0)+(\mu_k-\mu_j)\upsilon_{n}(x,0)\},\;x>0,\label{DTMDP2}\\
\upsilon_{n+1}(0,0)&=&-h(0)-c_1+\lambda\upsilon_{n}(0,1)+\mu_k\upsilon_{n}(0,0),\label{DTMDP3}\\
\upsilon_{0}(x,i)&=&0,\;x\geq0,\;i\in\{0,1\}.\label{DTMDP-int}
\end{eqnarray}
Note that in \eqref{DTMDP1} the iteration index on the right hand side is still $n+1$, because after an admission decision in state $(x,1)$ there is an instantaneous state switch to state $(x+1,0)$ or $(x,0)$, so that the corresponding service-rate decision can also be made at that instant.
The advantage of writing the optimality equations in this form is that only admission decisions are made in states $(x,1)$ and only service decisions in states $(x,0)$.

Since the state space is infinite and the one-step reward function is
not necessarily bounded, the convergence of
\eqref{DTMDP1}-\eqref{DTMDP-int} to the optimal value function must be
established.

To this end, we make the following assumption ensuring that the
holding cost does not increase too rapidly with the queue length.
\ \\[12pt]
\noindent{\bf Assumption 1}\ \\[-0.6cm]
\begin{enumerate}
\item[(c1)] There exists a constant $\theta>1$ such that: $h(x+1)\leq\theta h(x),\;\hbox{for any}\;x > 0.$
\item[(c2)] There exists a constant $\alpha\in[0,1)$ and a positive
integer $J$ such that: for $x\geq0$,
\begin{equation*}\label{VerThc2}\Lambda^J[R+c_k+h(x+J)]\leq\alpha[R+c_k+h(x)].\end{equation*}
\end{enumerate}

Assumption 1 is quite general.  It can be easily seen
that it is satisfied for power cost functions $h(x)=Kx^m,\;K>0,m
\geq 1$, as well as, exponential cost functions $h(x)=K\rho^x$ with
$K>0$ and $\rho\in(1,\frac{1}{\Lambda})$.

In the next theorem we show that under Assumption 1 there exists an
optimal policy for the discounted problem and the finite horizon
approximations converge to the unique solution of
\eqref{DTMDPu1}-\eqref{DTMDPu3}.

\begin{theorem}\label{VerificationTheorem}
If the holding cost rate function $h(x)$ satisfies Assumption 1, then
\begin{itemize}
\item[{\bf{i.}}] The system of equations \eqref{DTMDPu1}-\eqref{DTMDPu3} has a unique solution, which equals $\upsilon^*_{\beta}$.
\item[{\bf{ii.}}] There exists a stationary deterministic optimal policy.
\item[{\bf{iii.}}] The solution of the system of equations \eqref{DTMDP1}-\eqref{DTMDP-int} converges to $\upsilon^*_{\beta}$.
\end{itemize}
\end{theorem}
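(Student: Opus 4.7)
The plan is to apply the weighted-supremum-norm contraction framework for discounted Markov decision processes with unbounded one-stage rewards, as developed in Section 6.10 of \cite{puterman}. Introduce the weight function $w(x,i) = R + c + h(x) + 1$ on $S$ and work in the Banach space $B_w(S)$ of functions $v:S\to\mathbb{R}$ with finite weighted norm $\|v\|_w = \sup_{(x,i) \in S} |v(x,i)|/w(x,i)$. Let $T$ denote the Bellman operator on $B_w(S)$ whose components are the right-hand sides of \eqref{DTMDPu1}--\eqref{DTMDPu3}.

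First I would verify that $T$ maps $B_w(S)$ into itself. The one-stage contributions $-h(x)$, $-c$, $R$ are each dominated by $w(x,i)$, and the transition terms are convex combinations of values of $v$ at states differing from $(x,i)$ by at most one unit in the $x$-coordinate. Part~1 of Assumption~1 ensures $w(x+1,i) \leq \theta\, w(x,i)$ (after absorbing constants), so $Tv$ inherits a finite weighted norm. Next I would show that the $J$-fold iterate $T^J$ is a contraction on $B_w(S)$ with modulus $\alpha<1$: using the standard inequality $|\max_a f_1(a) - \max_a f_2(a)| \leq \max_a |f_1(a) - f_2(a)|$ to commute the maxima past the difference, iterating $T$ $J$ times bounds $|T^J v_1(x,i) - T^J v_2(x,i)|$ by a sum (indexed by action histories and transition sequences of length $J$) of terms of the form (product of transition probabilities)$\times\,|v_1(y,j) - v_2(y,j)|$ with $y \leq x + J$. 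Inequality \eqref{VerThc2} absorbs the resulting factor $\Lambda^J w(x+J,i)$ into $\alpha\, w(x,i)$, yielding $\|T^J v_1 - T^J v_2\|_w \leq \alpha \|v_1 - v_2\|_w$.

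Once the $J$-step contraction is established, Banach's fixed point theorem provides a unique fixed point $v^\ast \in B_w(S)$ of $T^J$; since $Tv^\ast$ is also fixed by $T^J$, uniqueness forces $Tv^\ast = v^\ast$, proving part~(i) modulo identification with $\upsilon^\ast_\beta$. Part~(iii) follows at once: starting from $\upsilon_0\equiv 0\in B_w(S)$, the iterates $\upsilon_n = T^n\upsilon_0$ satisfy $\|\upsilon_{n+J} - v^\ast\|_w \leq \alpha\|\upsilon_n - v^\ast\|_w$, so $\upsilon_n \to v^\ast$ in weighted norm. To identify $v^\ast$ with $\upsilon^\ast_\beta$, Part~2 of Assumption~1 is invoked again to control $\Ex^\pi[h(X(t))]$ uniformly over all history-dependent policies $\pi$, showing $\upsilon^\ast_\beta \in B_w(S)$; a standard truncation-and-dominated-convergence argument then verifies that $\upsilon^\ast_\beta$ satisfies the Bellman equation, so $\upsilon^\ast_\beta = v^\ast$. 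Part~(ii) is then immediate: since the action sets $\Aset(x,0)=\{l,h\}$ and $\Aset(x,1)=\{0,1\}\times\{l,h\}$ are finite, the stationary deterministic policy choosing any maximizer in \eqref{DTMDPu1}--\eqref{DTMDPu3} is well-defined and optimal.

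The main obstacle is the bookkeeping in the $J$-step contraction argument, specifically tracking how the weighted-norm bound propagates through $J$ successive applications of $T$ when different actions (service-rate and admission choices) may be selected at each stage. One must apply the commutation inequality for maxima cleanly, exploit that the uniformized transition probabilities shift the $x$-coordinate by at most one per transition, and then appeal to \eqref{VerThc2} to collapse the accumulated factor $\Lambda^J w(x+J,i)$ into $\alpha\, w(x,i)$ uniformly in $(x,i)$. The remaining identification of $v^\ast$ with $\upsilon^\ast_\beta$ is routine once the weighted integrability of the cost stream under every admissible policy has been established using Assumption~1.
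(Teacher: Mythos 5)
Your proposal is correct and follows essentially the same route as the paper: the paper simply cites Theorem 11.5.3 of Puterman and verifies its hypotheses with the weight function $w(x,i)=R+c+h(x)$, using part 1 of Assumption 1 for the one-step expectation bound and inequality \eqref{VerThc2} for the $J$-step condition, which is exactly the weighted-norm $J$-stage contraction machinery you unpack explicitly. The only difference is that you re-derive the contraction argument of Puterman's Section 6.10 rather than invoking the packaged theorem.
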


\begin{proof}
The proof follows by applying Theorem 11.5.3. of
\cite{puterman}. To do this we must verify the following
\begin{itemize}
\item[1.] Assumption 11.5.1 (\cite{puterman}) implies that all
transitions rates are bounded above. This is satisfied here with
$\Lambda$ being the upper bound.
\item[2.] There exists a function $w:S\rightarrow{\mbox{I\!R}}$ such that
\begin{itemize}
\item[2a.]
$\max_a|r(s,a)|\leq Mw(s),\;\forall s\in S$, where $r(s,a)$ is the one period profit function in the discrete time MDP \eqref{DTMDP1}-\eqref{DTMDP-int} and $M$ is a constant.
\item[2b.]There exists a non-negative constant $k<\infty$ for which
\begin{equation*}
\Ex^{\pi}\{w(X_{n+1})|X_{n}=x,I_n=i,Y_n=a\}\leq kw(x,i),
\end{equation*}
for all $a\in \Aset(x,i)$ and $(x,i)\in S$.
\item[2c.]There exists constant $\alpha\in[0,1)$ and $J\in\bf{Z}$ such that
\begin{equation*}
\Lambda^J  \Ex^{\pi}\{w(X_{n+J},I_{n+J})|X_{n}=x,I_{n}=i\}\leq\alpha w(x,i).
\end{equation*}
\end{itemize}
\end{itemize}
We will verify that the function
$w(x,i) \equiv w(x) = R+c_k+h(x),\;(x,i)\in S$ satisfies the above properties.
Note that $w(x)$ is increasing in $x$.

To show 2a, note that
\begin{equation*}
r(s,a)=\left\{\begin{array}{ll} R-c_j-h(x),&\hbox{for}\;s=(x,1),\;a=(1,j)\\
                               -c_j-h(x),&\hbox{for}\;s=(x,i),\;a=(0,j)\hbox{ or }a=j\\
                            \end{array}\right.
                            \end{equation*}
Thus, 2a holds for $M=1$.

To show 2b, for any $(x,i)\in S$ and $a\in \Aset(x,i)$ the possible transitions are to states with $x-1, x$ or $x+1$ customers. Since $w$ is increasing,
\begin{equation*}
\Ex^{\pi}\{w(X_{n+1})|X_{n}=x,I_n=i,Y_n=a\}\leq w(x+1).
\end{equation*}
From condition (c1), $w(x+1) \leq \theta w(x)$, thus 2b holds for $k=\theta$.

Finally, for 2c, iterating the above inequality $J$ times we obtain $E^{\pi}\{w(X_{n+J},I_{n+J})|X_{n}=x,I_{n}=i\}\leq w(x+J)$. Therefore it suffices to show that $\Lambda^Jw(x+J)\leq\alpha w(x)$ for some $\alpha<1$.
However the last inequality holds by condition (c2).
\end{proof}

\subsection{The Optimal Threshold Policy}\label{subsec-threspol}
In this subsection, we derive the structure of the optimal policy.
Specifically, we show in Theorem \ref{threspolicy2} that both admission and service rate controls are based on respective thresholds on the queue length. Furthermore, in Proposition \ref{optpol}
we derive a sufficient condition between the values of the economic and service
parameters, which makes the option to switch to a higher service service rate essentially of no value for
the service provider.

Let $\aad_n(x)$ be the optimal admission decision in state $(x,1)$ and
$\aser_n(x)$ the optimal service rate decision in state $(x,0)$ when
$n$ transitions remain. In addition, define
\begin{equation*}\Delta_n(x,i)=\upsilon_n(x,i)-\upsilon_n(x+1,i),\end{equation*}
as the loss in future rewards because of the increased load from an accepted arrival
and $\Delta h(x)=h(x+1)-h(x)$ the increase
in holding cost rate induced by an additional customer.

From equation \eqref{DTMDP1}, it follows
that for any $x\geq0$
\begin{equation}\label{dec-arr}
\aad_{n+1}(x)=1 \hbox{if and only if }\Delta_{n+1}(x,0)\leq R.\end{equation}

Moreover, from equation \eqref{DTMDP2}, we derive that
\begin{equation*}
\label{dec-dep}
\aser_{n+1}(x)=j\hbox{ if and only if } (\mu_j-\mu_i)\Delta_{n}(x-1,0)\geq c_j-c_i,\hbox{ for any }i\neq j.
\end{equation*}

Since $\zeta_1\leq\zeta_2\leq\ldots\leq\zeta_j$, it follows after some algebra that
\begin{equation}\label{conc_under_j} \frac{c_j-c_{j-1}}{\mu_j-\mu_{j-1}}\geq\frac{c_j-c_i}{\mu_j-\mu_i}, \hbox{ for any }i\leq j,\end{equation}
thus for $i<j$, the condition becomes $\Delta_{n}(x-1,0)\geq\zeta_j$.

Similarly, since $\zeta_{j+1}\leq\zeta_{j+2}\leq\ldots\leq\zeta_k$ we derive that
\begin{equation}\label{conc_above_j} \frac{c_{j+1}-c_j}{\mu_{j+1}-\mu_j}\leq\frac{c_i-c_j}{\mu_i-\mu_j}, \hbox{ for any }i\geq j,\end{equation}
thus for $i>j$, the condition becomes $\Delta_{n}(x-1,0)\leq\zeta_{j+1}$.

Therefore, the optimal service decision at any state $x$ can be summarized as follows
\begin{equation*}\aser_{n+1}(x)=j\mbox{ if and only if }\zeta_{j}\leq\Delta_{n}(x-1,0)\leq\zeta_{j+1}.\end{equation*}

The latter inequality implies that there exist situations where two or more service rates could be optimal. In order to make the analysis more tractable and without loss of generality, we make the following convention.
When more than one service rates are optimal in a specific state, we consider the lowest of them as the optimal. Following this convention, the optimal service rate decision can be rewritten as follows
\begin{equation}
\label{dec-dep}
\aser_{n+1}(x)=j,\hbox{ if and only if, } \zeta_j<\Delta_{n}(x-1,0)\leq\zeta_{j+1},\hbox{ for }j=2,\ldots,k-1,\;\hbox{and}\;x>0.
\end{equation}

Finally, as we have discussed in the beginning of this section,
\begin{equation}\label{dec-emp}
\aser_{n+1}(0)=1.
\end{equation}

In order to characterize the optimal policy, we first present some intermediate properties.
Lemma \ref{burdenprp1} shows that the value function is nonincreasing in $x$.

\begin{lemma}
\label{burdenprp1} The value function $\upsilon_{n}(x,i)$ is
nonincreasing in $x$.
\end{lemma}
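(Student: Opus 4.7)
The plan is to prove monotonicity of $\upsilon_n(x,i)$ in $x$ by induction on the horizon $n$, exploiting the finite-horizon recursion \eqref{DTMDP1}--\eqref{DTMDP-int}. Since Theorem \ref{VerificationTheorem} guarantees that $\upsilon_n \to \upsilon$ as $n \to \infty$, monotonicity for every $n$ passes to the limit $\upsilon$, which is what the statement asserts.

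The base case is immediate: $\upsilon_0(x,i)\equiv 0$ is constant, hence nonincreasing. For the inductive step, assume $\upsilon_n(\cdot,0)$ and $\upsilon_n(\cdot,1)$ are both nonincreasing and establish the same for $\upsilon_{n+1}$. I would do this in two sub-steps, in a specific order. First, I would handle $\upsilon_{n+1}(\cdot,0)$, because the recursion for $\upsilon_{n+1}(\cdot,1)$ reads off post-decision values at $(x,0)$ and $(x+1,0)$ within the same stage $n+1$, so monotonicity for $i=0$ at stage $n{+}1$ must be in hand before tackling $i=1$ at stage $n{+}1$.

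For $i=0$ with $x\geq 1$, I would compare \eqref{DTMDP2} at $x$ and at $x+1$ termwise: $-h(x)\geq -h(x+1)$ since $h$ is nondecreasing; $\lambda\upsilon_n(x,1)\geq\lambda\upsilon_n(x+1,1)$ and $\mu_l\upsilon_n(x-1,0)\geq\mu_l\upsilon_n(x,0)$ by the inductive hypothesis; and the max term is also nonincreasing because both of its arguments $\delta\upsilon_n(\cdot,0)$ and $-c+\delta\upsilon_n(\cdot-1,0)$ are shifted versions of nonincreasing functions, and the pointwise maximum of nonincreasing functions is nonincreasing. The boundary comparison $\upsilon_{n+1}(0,0)$ vs.\ $\upsilon_{n+1}(1,0)$ needs separate handling since \eqref{DTMDP3} has a different form; subtracting \eqref{DTMDP2} at $x=1$ from \eqref{DTMDP3} and using $\mu_h=\mu_l+\delta$ gives
\begin{equation*}
\upsilon_{n+1}(0,0)-\upsilon_{n+1}(1,0)=h(1)+\lambda[\upsilon_n(0,1)-\upsilon_n(1,1)]+\min\{\delta[\upsilon_n(0,0)-\upsilon_n(1,0)],\,c\},
\end{equation*}
and each of the three summands is nonnegative (the last because $c\geq 0$ and the bracket is nonnegative by induction), so $\upsilon_{n+1}(0,0)\geq \upsilon_{n+1}(1,0)$.

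Finally, for $i=1$, the recursion \eqref{DTMDP1} expresses $\upsilon_{n+1}(x,1)$ as a maximum of $R+\upsilon_{n+1}(x+1,0)$ and $\upsilon_{n+1}(x,0)$; by the sub-step just proved, both arguments are nonincreasing in $x$, so $\upsilon_{n+1}(\cdot,1)$ is nonincreasing as a pointwise max of nonincreasing sequences. I expect the only subtle point to be the $x=0$ boundary in the $i=0$ case, because there the transition structure differs (no low-rate service from an empty system, and the forced idle choice yields the $\mu_h$ coefficient) so one cannot just copy the termwise argument used for $x\geq 1$; once this boundary is dispatched as above, the rest is a clean termwise comparison plus the standard fact that monotonicity is preserved under maximization.
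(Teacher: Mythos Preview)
Your proposal is correct and follows essentially the same route as the paper: induction on $n$, handling $i=0$ first (with the $x=0$ boundary computed explicitly via the identity $\upsilon_{n+1}(0,0)-\upsilon_{n+1}(1,0)=h(1)+\lambda\Delta_n(0,1)+\min\{\delta\Delta_n(0,0),c\}$) and then $i=1$ via preservation of monotonicity under the pointwise maximum. One small remark: the lemma as stated is about the finite-horizon functions $\upsilon_n$, so the appeal to Theorem~\ref{VerificationTheorem} and passage to the limit is unnecessary here (though harmless and indeed used later in the paper).
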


\begin{proof}
We will prove that $\upsilon_{n}(x,i)$
is nonincreasing in $x$ for any $i=0,1$, or equivalently that $\Delta_{n}(x,i)\geq 0$, by induction on $n$.
Obviously, 
$\Delta_{0}(x,i)=0$ and
the statement holds for $n=0$.

Assume that
$\upsilon_{n}(x,i)$ is
nonincreasing in $x$ for $n$. Then, for $n+1$, we consider two
cases for $i$.

\textit{Case I: $i=0$.} First, for $x=0$, by \eqref{DTMDP2} and
\eqref{DTMDP3}, we obtain:
\begin{eqnarray*}
\Delta_{n+1}(0,0)&=& \upsilon_{n+1}(0,0)-\upsilon_{n+1}(1,0)\\
                 &=& \Delta h(0)-c_1+\lambda\Delta_{n}(0,1)-\max_{j=1,\ldots,k}\{-c_j-(\mu_k-\mu_j)\Delta_{n}(0,0)\}\\
                 &=& \Delta h(0)+\lambda\Delta_{n}(0,1)+\min_{j=1,\ldots,k}\{c_j-c_1+(\mu_k-\mu_j)\Delta_{n}(0,0)\}\geq0
\end{eqnarray*}
from the induction hypothesis and the monotonicity of $h(x)$.

For $x>0$, the terms of \eqref{DTMDP2} are nonincreasing functions of $x$ by the induction hypothesis, the assumption that $h(x)$ is increasing in $x$ and the fact that the maximum function of nonincreasing functions is
nonincreasing.

Therefore $\upsilon_{n+1}(x,0)$ is nonincreasing in $x$ for any $n$.

\textit{Case II: $i=1$.} For $i=1$, we obtain similarly that
$\upsilon_{n+1}(x,1)$ is nonincreasing in $x$ from \eqref{DTMDP1} and Case I.

Therefore the statement holds for $n+1$ and the proof is complete.
\end{proof}

The monotonicity of $\upsilon_n(x,i)$ in $x$ is intuitive. It implies
that $\Delta_n(x,i)$ is nonnegative, thus it can be seen as a burden
or profit reduction induced by one additional customer in state
$(x,i)$.

In the next theorem we show that the optimal policy is characterized
by service and admission thresholds. We first define a generic
threshold-type function that will be used in all the results.

For a function $f:{\bf{N_0}}\rightarrow{\mbox{I\!R}}$ and
$\theta\in \mbox{I\!R}$ define
\begin{equation}\label{thresgenfcn}
T_{f}(\theta)=\sup\{k\geq0:\;f(k)\leq\theta\},
\end{equation}
 with the convention $\sup\emptyset=-1$.

It is easy to see that $T_{f}(\theta)$ has the following
properties:
\begin{enumerate}
\item[(a)] $T_{f}(\theta)$ is non-decreasing in $\theta$ for all
non-decreasing functions $f$.
\item[(b)] If $f,g$ are such that $f(k)\leq g(k)$ for all
$k=0,1,\ldots$, then $T_f(\theta)\geq T_{g}(\theta)$ for any
$\theta\in R$.
\end{enumerate}

We now proceed to the Theorem.
\begin{theorem}
 \label{threspolicy2}
\begin{center}
    \begin{enumerate}
     \item[]
     \item[{\bf{i.}}] The value function $\upsilon_n(x,i)$ is
     concave in $x$, for $i=0,1$.
     \item[{\bf{ii.}}] There exist service thresholds $B^s_{n,j}$ for $j=1,\ldots,k$ and admission thresholds $B^d_{n}$ such that:
\begin{equation}\label{opdecempty2}
\aser_{n+1}(0)=1,
\end{equation}
\begin{equation}\label{thresA2}
\aser_{n+1}(x)=j,\mbox{ if and only if, }B^s_{n,j-1}<x\leq B^s_{n,j}, \mbox{ for } x>0, \mbox{ and } j=1,\ldots,k,
\end{equation}

and,
\begin{equation}\label{thresB2}
\aad_{n+1}(x)=1,\mbox{ if and only if, }x\leq B^d_{n+1}, \mbox{ for } x\geq0,
\end{equation}
where
\begin{equation*}\label{defthresA2}
B^s_{n,j}=1+T_{\Delta_n(\cdot,0)}(\zeta_{j+1}),\;j=1,\ldots,k
\end{equation*}
and,
\begin{equation*}\label{defthresB2}
B^d_{n}=T_{\Delta_n(\cdot,0)}(R).
\end{equation*}
\end{enumerate}
\end{center}
\end{theorem}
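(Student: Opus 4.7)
The plan is to prove parts (i) and (ii) jointly by induction on $n$, exploiting the familiar feedback between concavity of the value function in $x$ and the threshold structure of the optimal controls. The base case $n=0$ is immediate from \eqref{DTMDP-int}. For the inductive step, assume $\upsilon_n(\cdot, i)$ is concave for both $i = 0, 1$. Then the increments $\Delta_n(\cdot, 0)$ are nondecreasing, and the optimality characterization \eqref{dec-dep} together with monotonicity property (a) of the generic threshold function $T_f$ yields directly \eqref{thresA2} with $B^s_n = 1 + T_{\Delta_n(\cdot,0)}(c/\delta)$; the identity $\aser_{n+1}(0) = l$ is \eqref{dec-emp}. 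The admission threshold \eqref{thresB2} with $B^d_{n+1} = T_{\Delta_{n+1}(\cdot,0)}(R)$ follows analogously from \eqref{dec-arr} once concavity of $\upsilon_{n+1}(\cdot, 0)$ is in hand.

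The substantive work is preserving concavity from step $n$ to $n+1$. In \eqref{DTMDP2}, the terms $-h(x)$, $\lambda\upsilon_n(x,1)$, and $\mu_l\upsilon_n(x-1, 0)$ are concave in $x$ by convexity of $h$ and the inductive hypothesis. The delicate piece is the maximum $M_n(x) := \max\{\delta\upsilon_n(x, 0),\, -c + \delta\upsilon_n(x-1, 0)\}$, since the pointwise maximum of two concave functions is not generally concave. Here the threshold structure rescues the argument: the active branch switches from the first to the second exactly at $x = B^s_n$. Within each region, concavity of $M_n$ follows from concavity of $\upsilon_n(\cdot, 0)$. At the kink, the second-difference inequality $M_n(B^s_n-1) + M_n(B^s_n+1) - 2 M_n(B^s_n) \leq 0$ simplifies, after substituting the active branches, to the double bound $\delta\Delta_n(B^s_n-1, 0) \leq c \leq \delta\Delta_n(B^s_n, 0)$, which is precisely what the definition $B^s_n - 1 = T_{\Delta_n(\cdot,0)}(c/\delta)$ guarantees (with inductive concavity ensuring the threshold is genuine). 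The boundary case $x = 1$ requires a separate second-difference check combining \eqref{DTMDP2} and \eqref{DTMDP3}; after using $\mu_h = \mu_l + \delta$ and setting $M_n(0) = \delta\upsilon_n(0,0)$ (forced by \eqref{dec-emp}), it reduces to the same kink analysis at $x = 1$ plus convexity of $h$ and the nonnegativity of $\Delta_n(0,0)$ from Lemma \ref{burdenprp1}.

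Once concavity of $\upsilon_{n+1}(\cdot, 0)$ is established, the admission threshold $B^d_{n+1}$ is read off from \eqref{dec-arr}, and the same type of kink analysis applied to $\max\{R + \upsilon_{n+1}(x+1, 0),\, \upsilon_{n+1}(x, 0)\}$ in \eqref{DTMDP1} carries concavity to $\upsilon_{n+1}(\cdot, 1)$: away from $B^d_{n+1}$ the value is $\upsilon_{n+1}(\cdot,0)$ shifted, and at the kink the required second-difference inequality collapses to $\Delta_{n+1}(B^d_{n+1},0) \leq R \leq \Delta_{n+1}(B^d_{n+1}+1,0)$, which is exactly the defining property of $B^d_{n+1}$. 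The main obstacle throughout is this kink analysis for the pointwise maxima; every other step in the induction is a routine verification using the inductive hypothesis and properties of $T_f$.
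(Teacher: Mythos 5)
Your proposal is correct and follows essentially the same route as the paper: a joint induction in which concavity of $\upsilon_n(\cdot,i)$ (equivalently, monotonicity of $\Delta_n(\cdot,0)$) yields the threshold structure via \eqref{dec-arr}--\eqref{dec-emp}, and concavity is then propagated to $n+1$ by checking the second differences at the single kink of each pointwise maximum, where the defining inequalities $\delta\Delta_n(B^s_n-1,0)\leq c<\delta\Delta_n(B^s_n,0)$ and $\Delta_{n+1}(B^d_{n+1},0)\leq R<\Delta_{n+1}(B^d_{n+1}+1,0)$ do exactly the work you describe. Your explicit flagging of the boundary check at $x=0,1$ (where \eqref{DTMDP3} replaces \eqref{DTMDP2}) is a detail the paper's proof glosses over, but it does not change the argument.
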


\begin{proof}
The concavity of $\upsilon_n(x,i)$ in state $(x,i)$ is equivalent to
$\Delta_n(x,i)$ being nondecreasing in $x$. The proof is by induction on $n$.
\\
For $n=0$, since $\upsilon_0(x,i)=0$, we obtain
$\Delta_0(x,i)=0$.
\\
It follows that $\aser_1(x)=1$ for all $x\geq0$, thus
\eqref{opdecempty2} and \eqref{thresA2} hold with $B^s_{0,j}=+\infty,$ for $j=1,\ldots,k$.
\\
Furthermore, from \eqref{DTMDP2} and \eqref{DTMDP3},
$\Delta_1(x,0)=\Delta h(x),\;x\geq0$, where $\Delta h(x)$ is
increasing by assumption, thus \eqref{thresB2} holds with
$B^d_1=\sup\{k\geq1, \Delta h(k)\leq R\}=T_{\Delta_1(\cdot,0)}(R)$.
\\
Now suppose that {\bf{i.}} holds for some $n$. In order to
prove the theorem, it suffices to show that {\bf{ii.}} holds for $n$
and {\bf{i.}} holds for $n+1$. To do this we will prove the
following claims in sequence:

\begin{enumerate}
\item[(a)] \eqref{thresA2} holds for $n$.
\item[(b)] $\Delta_{n+1}(x,0)$ is nondecreasing in $x$.
\item[(c)] \eqref{thresB2} holds for $n$.
\item[(d)] $\Delta_{n+1}(x,1)$ is nondecreasing in $x$.
\end{enumerate}

(a) Let $B^s_{n,j}=\sup\{x\geq1:\;\Delta_{n}(x-1,0)\leq\zeta_{j+1}\}$ for $j=0,\ldots,k$.
\\
Note that $B^s_{n,j}=1+T_{\Delta_n(\cdot,0)}(\zeta_{j+1})$, for $j=1,\ldots,k-1$ and $B^s_{n,0}=0,\;B^s_{n,k}=+\infty$.
\\
Since, by the induction hypothesis, $\Delta_n(x,0)$ is
nondecreasing in $x$ and $\zeta_1\leq\zeta_2\leq\ldots\leq\zeta_{k+1}$, it follows from \eqref{dec-dep} that
\begin{equation}\label{dec-thresA}
\aser_{n+1}(x)=j,\mbox{ if and only if, }B^s_{n,j-1}<x\leq B^s_{n,j}, \mbox{ for } x>0, \mbox{ and } j=1,\ldots,k,
\end{equation}
which completes (a).
\\
Moreover, we derive that $B^s_{n,0}\leq B^s_{n,1}\leq\ldots\leq B^s_{n,k}$.
\\
(b) Under (a), \eqref{DTMDP2} is transformed to
\begin{equation}\label{DTMDP-thresA}
\upsilon_{n+1}(x,0)=-h(x)+\lambda\upsilon_{n}(x,1)-c_j+\mu_j\upsilon_{n}(x-1,0)+(\mu_k-\mu_j)\upsilon_{n}(x,0),
\end{equation}
if and only if, $B^s_{n,j-1}<x\leq B^s_{n,j},$ for $j=1,\ldots,k$.\\

From the convexity of $h(x)$ and the concavity of $\upsilon_{n}(x,i)$ for $i=0,1$ by the induction hypothesis, we derive that $\upsilon_{n+1}(x,0)$ is concave in $x$, for all $x$ except from the threshold values. Thus, in order to complete the proof of (b), we must show that the following inequalities hold.

\begin{enumerate}
\item[$(I_1)$] $\Delta_{n+1}(0,0)\leq\Delta_{n+1}(1,0)$.
\item[$(I_2)$] $\Delta_{n+1}(B^s_{n,j}-1,0)\leq\Delta_{n+1}(B^s_{n,j},0)\leq\Delta_{n+1}(B^s_{n,j}+1,0)$ for any $j=1,\ldots,k-2.$
\item[$(I_3)$] $\Delta_{n+1}(B^s_{n,k-1}-1,0)\leq\Delta_{n+1}(B^s_{n,k-1},0)$
\end{enumerate}

$(I_1):$ We must consider the following cases for the service rate employed at states $x=1$ and $x=2$.
\\

{\it{Case 1A:}} Let $\aser_{n+1}(1)=j$ and $\aser_{n+1}(2)=j$ for some $j$. By \eqref{DTMDP3} and \eqref{DTMDP-thresA} we obtain that
\begin{eqnarray}
\nonumber  \Delta_{n+1}(0,0)&=& \Delta h(0)+c_j-c_1+\lambda\Delta_{n}(0,1)+(\mu_k-\mu_j)\Delta_{n}(0,0),\\
\nonumber  \Delta_{n+1}(1,0)&=& \Delta h(1)+\lambda\Delta_{n}(1,1)+\mu_j\Delta_{n}(0,0)+(\mu_k-\mu_j)\Delta_{n}(1,0).
\end{eqnarray}
From the convexity of $h(x)$ and the induction hypothesis, we derive that $\Delta h(0)\leq\Delta h(1)$ and $\Delta_{n}(0,i)\leq\Delta_{n}(1,i)$ for $i=0,1$.

In addition, from \eqref{conc_under_j} it follows that
\begin{equation}\label{decI1A}
\frac{c_j-c_{1}}{\mu_j-\mu_{1}}\leq\frac{c_j-c_{j-1}}{\mu_j-\mu_{j-1}}=\zeta_{j}<\Delta_{n}(0,0),\mbox{ for }j=2,\ldots,k,
\end{equation}
since $\aser_{n+1}(1)=j$.\\

Thus, from \eqref{decI1A} and Lemma \ref{burdenprp1} we derive that $c_j-c_{1}-\mu_j\Delta_{n}(0,0)<-\mu_{1}\Delta_{n}(0,0)<0$ which completes the proof of $(I_1)$ in case $1A$.
\\

{\it{Case 1B:}} Let $\aser_{n+1}(1)=j$ and $\aser_{n+1}(2)=j'>j$ for some $j$. By \eqref{DTMDP3} and \eqref{DTMDP-thresA} we obtain that
\begin{eqnarray}
\nonumber  \Delta_{n+1}(0,0)&=& \Delta h(0)+c_j-c_1+\lambda\Delta_{n}(0,1)+(\mu_k-\mu_j)\Delta_{n}(0,0),\\
\nonumber  \Delta_{n+1}(1,0)&=& \Delta h(1)+c_{j'}-c_j+\lambda\Delta_{n}(1,1)+\mu_j\Delta_{n}(0,0)+(\mu_k-\mu_{j'})\Delta_{n}(1,0).
\end{eqnarray}
From the convexity of $h(x)$ and the induction hypothesis, we derive that $\Delta h(0)\leq\Delta h(1)$ and $\Delta_{n}(0,i)\leq\Delta_{n}(1,i)$ for $i=0,1$.

In addition, since $\aser_{n+1}(1)=j$, it follows from \eqref{conc_above_j} and \eqref{dec-dep} that
\begin{equation*}\label{decI1B}\Delta_{n}(0,0)\leq\zeta_{j+1}=\frac{c_{j+1}-c_j}{\mu_{j+1}-\mu_j}\leq\frac{c_{j'}-c_j}{\mu_{j'}-\mu_j},\mbox{ for }j=2,\ldots,k.\end{equation*}
\\
From this inequality and the induction hypothesis we derive that
\begin{eqnarray}\label{decI1Bb}\nonumber (\mu_k-\mu_j)\Delta_{n}(0,0)&\leq& c_{j'}-c_j+(\mu_k-\mu_{j'})\Delta_{n}(0,0)\\\label{decI1Bb}&\leq& c_{j'}-c_j+(\mu_k-\mu_{j'})\Delta_{n}(1,0).\end{eqnarray}

Finally, since inequality \eqref{decI1A} still hold, then similarly from Lemma \ref{burdenprp1} and \eqref{decI1Bb} we prove that $(I_1)$ also holds for case $1B$.
\\

$(I_2):$ As in the proof of $(I_1)$, we must consider the following cases for the service rate employed at states $(B^s_{n,j}-1,0), (B^s_{n,j},0), (B^s_{n,j}+1,0)$ and $(B^s_{n,j}+2,0)$ for any $j=1,\ldots,k-2$.\\

{\it{Case 2A:}} Let $\aser_{n+1}(B^s_{n,j}-1)=j,\; \aser_{n+1}(B^s_{n,j})=j,\; \aser_{n+1}(B^s_{n,j}+1)=j+1$ and $\aser_{n+1}(B^s_{n,j}+2)=j+1$ for any $j=1,\ldots,k-2$.\\

By \eqref{DTMDP-thresA}, we obtain the following after some algebra for any $j=1,\ldots,k-2$:
\begin{eqnarray}
\nonumber  \Delta_{n+1}(B^s_{n,j}-1,0)&=& \upsilon_{n+1}(B^s_{n,j}-1,0)-\upsilon_{n+1}(B^s_{n,j},0)\\
\nonumber   &=&\Delta h(B^s_{n,j}-1)+\lambda\Delta_{n}(B^s_{n,j}-1,1)+\mu_j\Delta_{n}(B^s_{n,j}-2,0)\label{memberA1}\\
&&+(\mu_k-\mu_j)\Delta_{n}(B^s_{n,j}-1,0),\\
\nonumber\Delta_{n+1}(B^s_{n,j},0)&=& \upsilon_{n+1}(B^s_{n,j},0)-\upsilon_{n+1}(B^s_{n,j}+1,0)\\
\nonumber &=&c_{j+1}-c_j+\Delta h(B^s_{n,j})+\lambda\Delta_{n}(B^s_{n,j},1)+\mu_j\Delta_{n}(B^s_{n,j}-1,0)\\
&&+(\mu_k-\mu_{j+1})\Delta_{n}(B^s_{n,j},0),\label{memberA2}
\end{eqnarray}
 and,
\begin{eqnarray}
\nonumber\Delta_{n+1}(B^s_{n,j}+1,0)&=& \upsilon_{n+1}(B^s_{n,j}+1,0)-\upsilon_{n+1}(B^s_{n,j}+2,0)\\
\nonumber &=&\Delta h(B^s_{n,j}+1)+\lambda\Delta_{n}(B^s_{n,j}+1,1)+\mu_{j+1}\Delta_{n}(B^s_{n,j},0)\\
&&+(\mu_k-\mu_{j+1})\Delta_{n}(B^s_{n,j}+1,0).\label{memberA3}
\end{eqnarray}

From the convexity of $h(x)$ and the induction hypothesis,
we obtain:
\begin{equation}\label{ordI2A1}
\Delta h(B^s_{n,j}-1)\leq\Delta h(B^s_{n,j})\leq\Delta h(B^s_{n,j}+1),
\end{equation}
\begin{equation}\label{ordI2A2}
\Delta_{n}(B^s_{n,j}-1,1)\leq\Delta_{n}(B^s_{n,j},1)\leq\Delta_{n}(B^s_{n,j}+1,1),
\end{equation}
and
\begin{equation}\label{ordI2A3}
\Delta_{n}(B^s_{n,j}-2,0)\leq\Delta_{n}(B^s_{n,j}-1,0)\leq\Delta_{n}(B^s_{n,j},0)\leq\Delta_{n}(B^s_{n,j}+1,0).
\end{equation}

Since $\aser_{n+1}(B^s_{n,j})=j$ and $\aser_{n+1}(B^s_{n,j}+1)=j+1$, the following inequality hold from \eqref{dec-dep} and the definition of $B^s_{n,j}$,
\begin{equation}\label{decI2A}
\Delta_{n}(B^s_{n,j}-1,0)\leq \zeta_{j+1}=\frac{c_{j+1}-c_j}{\mu_{j+1}-\mu_j}<\Delta_{n}(B^s_{n,j},0).
\end{equation}

From \eqref{ordI2A3} and the first inequality of \eqref{decI2A}
\begin{eqnarray}\label{orddecI2Aa}
\nonumber(\mu_k-\mu_j)\Delta_{n}(B^s_{n,j}-1,0)&\leq& c_{j+1}-c_j+(\mu_k-\mu_{j+1})\Delta_{n}(B^s_{n,j}-1,0)\\&\leq& c_{j+1}-c_j+(\mu_k-\mu_{j+1})\Delta_{n}(B^s_{n,j},0).
\end{eqnarray}

Similarly, from \eqref{ordI2A3} and the second inequality of \eqref{decI2A}
\begin{equation}\label{orddecI2Ab}
c_{j+1}-c_j+\mu_j\Delta_{n}(B^s_{n,j}-1,0)\leq c_{j+1}-c_j+\mu_j\Delta_{n}(B^s_{n,j},0)<\mu_{j+1}\Delta_{n}(B^s_{n,j},0),
\end{equation}

By \eqref{memberA1} through \eqref{memberA3} and the above inequalities given in \eqref{ordI2A1} through \eqref{orddecI2Ab}, verify that $(I_2)$ is true for Case $2A$.\\

{\it{Case 2B:}} Let $\aser_{n+1}(B^s_{n,j}-1)=j,\; \aser_{n+1}(B^s_{n,j})=j,\; \aser_{n+1}(B^s_{n,j}+1)=j+1$ and $\aser_{n+1}(B^s_{n,j}+2)=j+2$ for any $j=1,\ldots,k-2$.\\

From case $2A$, we have proved that $\Delta_{n+1}(B^s_{n,j}-1,0)\leq\Delta_{n+1}(B^s_{n,j},0)$ and it remains to show that $\Delta_{n+1}(B^s_{n,j},0)\leq\Delta_{n+1}(B^s_{n,j}+1,0)$.\\

By \eqref{DTMDP-thresA}, we obtain the following after some algebra for any $j=1,\ldots,k-2$:
\begin{eqnarray}
\nonumber\Delta_{n+1}(B^s_{n,j},0)&=& \upsilon_{n+1}(B^s_{n,j},0)-\upsilon_{n+1}(B^s_{n,j}+1,0)\\
\nonumber &=&c_{j+1}-c_j+\Delta h(B^s_{n,j})+\lambda\Delta_{n}(B^s_{n,j},1)+\mu_j\Delta_{n}(B^s_{n,j}-1,0)\\
&&+(\mu_k-\mu_{j+1})\Delta_{n}(B^s_{n,j},0), \label{memberI2B2}
\end{eqnarray}
and,
\begin{eqnarray}
\nonumber\Delta_{n+1}(B^s_{n,j}+1,0)&=& \upsilon_{n+1}(B^s_{n,j}+1,0)-\upsilon_{n+1}(B^s_{n,j}+2,0)\\
\nonumber &=&c_{j+2}-c_{j+1}+\Delta h(B^s_{n,j}+1)+\lambda\Delta_{n}(B^s_{n,j}+1,1)+\mu_{j+1}\Delta_{n}(B^s_{n,j},0)\\
&&+(\mu_k-\mu_{j+2})\Delta_{n}(B^s_{n,j}+1,0).\label{memberI2B3}
\end{eqnarray}

From the convexity of $h(x)$ and the induction hypothesis,
we obtain:
\begin{equation}\label{ordI2B1}
\Delta h(B^s_{n,j})\leq\Delta h(B^s_{n,j}+1),
\end{equation}
\begin{equation}\label{ordI2B2}
\Delta_{n}(B^s_{n,j},1)\leq\Delta_{n}(B^s_{n,j}+1,1),
\end{equation}
and
\begin{equation}\label{ordI2B3}
\Delta_{n}(B^s_{n,j}-1,0)\leq\Delta_{n}(B^s_{n,j},0)\leq\Delta_{n}(B^s_{n,j}+1,0).
\end{equation}

Since $\aser_{n+1}(B^s_{n,j})=j, \aser_{n+1}(B^s_{n,j}+1)=j+1$ and $\aser_{n+1}(B^s_{n,j}+2)=j+2$, the following inequality hold from \eqref{dec-dep} and the definitions of $B^s_{n,j}$ and $B^s_{n,j+1}$,
\begin{equation}\label{decI2B}
\zeta_{j+1}=\frac{c_{j+1}-c_j}{\mu_{j+1}-\mu_j}<\Delta_{n}(B^s_{n,j},0)\leq\zeta_{j+2}=\frac{c_{j+2}-c_{j+1}}{\mu_{j+2}-\mu_{j+1}}.
\end{equation}

From \eqref{ordI2B3} and the first inequality of \eqref{decI2B}
\begin{equation}\label{orddecI2Ba}
c_{j+1}-c_j+\mu_j\Delta_{n}(B^s_{n,j}-1,0)\leq c_{j+1}-c_j+\mu_j\Delta_{n}(B^s_{n,j},0)<\mu_{j+1}\Delta_{n}(B^s_{n,j},0).
\end{equation}

Similarly, from \eqref{ordI2B3} and the second inequality of \eqref{decI2B}
\begin{eqnarray}\label{orddecI2Bb}
\nonumber(\mu_k-\mu_{j+1})\Delta_{n}(B^s_{n,j},0)&\leq& c_{j+2}-c_{j+1}+(\mu_k-\mu_{j+2})\Delta_{n}(B^s_{n,j},0)\\&\leq& c_{j+2}-c_{j+1}+(\mu_k-\mu_{j+2})\Delta_{n}(B^s_{n,j}+1,0),
\end{eqnarray}

By \eqref{memberI2B2}, \eqref{memberI2B3} and the inequalities \eqref{ordI2B1} through \eqref{orddecI2Bb}, it follows that $\Delta_{n+1}(B^s_{n,j}-1,0)\leq\Delta_{n+1}(B^s_{n,j},0)$, thus $(I_2)$ is also true for Case $2B$.\\

{\it{Case 2C:}} Let $\aser_{n+1}(B^s_{n,j}-1)=j-1,\; \aser_{n+1}(B^s_{n,j})=j,\; \aser_{n+1}(B^s_{n,j}+1)=j+1$ and $\aser_{n+1}(B^s_{n,j}+2)=j+1$ for any $j=1,\ldots,k-2$.\\

From case $2A$, we have proved that $\Delta_{n+1}(B^s_{n,j},0)\leq\Delta_{n+1}(B^s_{n,j}+1,0)$ and it remains to show that $\Delta_{n+1}(B^s_{n,j}-1,0)\leq\Delta_{n+1}(B^s_{n,j},0)$.\\

By \eqref{DTMDP-thresA}, we obtain the following after some algebra for any $j=1,\ldots,k-2$:
\begin{eqnarray}
\nonumber  \Delta_{n+1}(B^s_{n,j}-1,0)&=& \upsilon_{n+1}(B^s_{n,j}-1,0)-\upsilon_{n+1}(B^s_{n,j},0)\\
\nonumber  &=&c_{j}-c_{j-1}+\Delta h(B^s_{n,j}-1)+\lambda\Delta_{n}(B^s_{n,j}-1,1)+\mu_{j-1}\Delta_{n}(B^s_{n,j}-2,0)\label{memberC1}\\
&&+(\mu_k-\mu_{j})\Delta_{n}(B^s_{n,j}-1,0)
\end{eqnarray}
 and
\begin{eqnarray}
\nonumber\Delta_{n+1}(B^s_{n,j},0)&=& \upsilon_{n+1}(B^s_{n,j},0)-\upsilon_{n+1}(B^s_{n,j}+1,0)\\
\nonumber &=&c_{j+1}-c_j+\Delta h(B^s_{n,j})+\lambda\Delta_{n}(B^s_{n,j},1)+\mu_j\Delta_{n}(B^s_{n,j}-1,0)\\
&&+(\mu_k-\mu_{j+1})\Delta_{n}(B^s_{n,j},0).\label{memberC2}
\end{eqnarray}

From the convexity of $h(x)$ and the induction hypothesis,
we obtain:
\begin{equation}\label{ordI2C1}
\Delta h(B^s_{n,j}-1)\leq\Delta h(B^s_{n,j}),
\end{equation}
\begin{equation}\label{ordI2C2}
\Delta_{n}(B^s_{n,j}-1,1)\leq\Delta_{n}(B^s_{n,j},1),
\end{equation}
and
\begin{equation}\label{ordI2C3}
\Delta_{n}(B^s_{n,j}-2,0)\leq\Delta_{n}(B^s_{n,j}-1,0)\leq\Delta_{n}(B^s_{n,j},0).
\end{equation}

Since, $\aser_{n+1}(B^s_{n,j}-1)=j-1, \aser_{n+1}(B^s_{n,j})=j$ and $\aser_{n+1}(B^s_{n,j}+1)=j+1$ the following inequality holds from \eqref{dec-dep} and the definitions of $B^s_{n,j-1}$ and $B^s_{n,j}$,
\begin{equation}\label{decI2C}
\zeta_{j}=\frac{c_{j}-c_{j-1}}{\mu_{j}-\mu_{j-1}}<\Delta_{n}(B^s_{n,j}-1,0)\leq\zeta_{j}=\frac{c_{j+1}-c_{j}}{\mu_{j+1}-\mu_{j}}.
\end{equation}

From \eqref{ordI2C3} and the first inequality of \eqref{decI2C}
\begin{equation}\label{orddecI2Ca}
c_{j}-c_{j-1}+\mu_{j-1}\Delta_{n}(B^s_{n,j}-2,0)\leq c_{j}-c_{j-1}+\mu_{j-1}\Delta_{n}(B^s_{n,j}-1,0)<\mu_{j}\Delta_{n}(B^s_{n,j}-1,0).
\end{equation}

Similarly, from \eqref{ordI2C3} and the second inequality of \eqref{decI2C}
\begin{eqnarray}\label{orddecI2Cb}
\nonumber(\mu_k-\mu_{j})\Delta_{n}(B^s_{n,j}-1,0)&\leq &c_{j+1}-c_{j}+(\mu_k-\mu_{j+1})\Delta_{n}(B^s_{n,j}-1,0)\\&\leq& c_{j+1}-c_{j}+(\mu_k-\mu_{j+1})\Delta_{n}(B^s_{n,j},0),
\end{eqnarray}

By \eqref{memberC1}, \eqref{memberC2} and the inequalities \eqref{ordI2C1} through \eqref{orddecI2Cb}, it follows that $\Delta_{n+1}(B^s_{n,j}-1,0)\leq\Delta_{n+1}(B^s_{n,j},0)$, thus $(I_2)$ is also true for Case $2C$.\\

{\it{Case 2D:}} Let $\aser_{n+1}(B^s_{n,j}-1)=j-1,\; \aser_{n+1}(B^s_{n,j})=j,\; \aser_{n+1}(B^s_{n,j}+1)=j+1$ and $\aser_{n+1}(B^s_{n,j}+2)=j+2$ for any $j=1,\ldots,k-2$.\\

Cases $2A,\; 2B$ and $2C$, imply that inequality $(I_2)$ is also true for the service policy assumed in case $2D$ for any $j=1,\ldots,k-2$ and the proof of $(I_2)$ is complete.\\

$(I_3):$ By \eqref{DTMDP3} and \eqref{DTMDP-thresA}, we obtain that
\begin{eqnarray}
\nonumber  \Delta_{n+1}(B^s_{n,k-1}-1,0)&=& \Delta h(B^s_{n,k-1}-1)+\lambda\Delta_{n}(B^s_{n,k-1}-1,1)+\mu_{k-1}\Delta_{n}(B^s_{n,k-1}-2,0)\\
&&\nonumber+(\mu_k-\mu_{k-1})\Delta_{n}(B^s_{n,k-1}-1,0),\\
\nonumber  \Delta_{n+1}(B^s_{n,k-1},0)&=& c_k-c_{k-1}+\Delta h(B^s_{n,k-1})+\lambda\Delta_{n}(B^s_{n,k-1},1)+\mu_{k-1}\Delta_{n}(B^s_{n,k-1}-1,0)\\
&&+\mu_k\Delta_{n}(B^s_{n,k-1},0).
\end{eqnarray}

Once again, from the convexity of $h(x)$ and the induction hypothesis, we obtain that
\begin{eqnarray*}\Delta h(B^s_{n,k-1}-1)&\leq&\Delta h(B^s_{n,k-1}),\\ \Delta_{n}(B^s_{n,k-1}-1,1)&\leq&\Delta_{n}(B^s_{n,k-1},1)\end{eqnarray*} and
\begin{equation*} \Delta_{n}(B^s_{n,k-1}-2,0)\leq\Delta_{n}(B^s_{n,k-1}-1,0)\leq\Delta_{n}(B^s_{n,k-1},0).\end{equation*}
In addition, from Lemma \ref{burdenprp1}, it follows that $c_k-c_{k-1}+\mu_{k-1}\Delta_{n}(B^s_{n,k-1}-1,0)\geq0$, which completes the proof of $(I_3)$.\\

(c) Let $B^d_{n+1}=\sup\{x\geq0:\; \Delta_{n+1}(x,0)\leq
R\}$. Note that $B^d_{n}=T_{\Delta_n(\cdot,0)}(R)$.\\
 Since from (b) $\Delta_{n+1}(x,0)$ is nondecreasing in $x$,
 it follows from \eqref{dec-arr} that
\begin{equation}\label{dec-thresB}
\aad_{n+1}(x)=\left\{\begin{array}{ll}
                       1, & \hbox{if } x\leq B^d_{n+1}\\
                       0, & \hbox{if } x>B^d_{n+1}\\
                     \end{array}
                        \right.
\end{equation}

(d) Under (c), optimality equation \eqref{DTMDP1} is
transformed to
\begin{equation}\label{DTMDP-thresB}
\upsilon_{n+1}(x,1)=\left\{\begin{array}{ll}
                               R+\upsilon_{n+1}(x+1,0), & \hbox{if } x\leq B^d_{n+1}\\
                               \upsilon_{n+1}(x,0), & \hbox{if } x>B^d_{n+1}\\
                           \end{array}
                           \right.
\end{equation}
As in (b), by \eqref{DTMDP-thresB} we obtain that
$\upsilon_{n+1}(x,1)$ is concave in $x$ for $x\leq B^d_{n+1}-2$ and
for $x\geq B^d_{n+1}+1$,
 because $\upsilon_{n+1}(x,0)$ is
concave in $x$, as we have proved in (b).\\
In order to complete the proof we have to consider the cases $x=B^d_{n+1}-1$ and $x=B^d_{n+1}$, thus we need to show
\begin{equation}\label{ineqadmthres}
\Delta_{n+1}(B^d_{n+1}-1,1)\leq\Delta_{n+1}(B^d_{n+1},1)\leq\Delta_{n+1}(B^d_{n+1}+1,1).
\end{equation}
 After some algebra we obtain that
\begin{eqnarray}
\Delta_{n+1}(B^d_{n+1}-1,1)&=&\upsilon_{n+1}(B^d_{n+1}-1,1)-\upsilon_{n+1}(B^d_{n+1},1) \nonumber \\
    & = & \Delta_{n+1}(B^d_{n+1},0),\label{memberB1}\\
\Delta_{n+1}(B^d_{n+1},1)&=&\upsilon_{n+1}(B^d_{n+1},1)-\upsilon_{n+1}(B^d_{n+1}+1,1)\nonumber \\
    & = & R,\label{memberB2}\\
\Delta_{n+1}(B^d_{n+1}+1,1)&=&\upsilon_{n+1}(B^d_{n+1}+1,1)-\upsilon_{n+1}(B^d_{n+1}+2,1) \nonumber \\
    &=& \Delta_{n+1}(B^d_{n+1}+1,0).\label{memberB3}
\end{eqnarray}
 By the definition of $B^d_{n+1}$, it follows that
\begin{equation}\label{ordB1}
\Delta_{n+1}(B^d_{n+1},0)\leq R<\Delta_{n+1}(B^d_{n+1}+1,0),
\end{equation}
 By inequalities \eqref{memberB1}-\eqref{ordB1},
\eqref{ineqadmthres} holds and this completes the proof of the theorem.
\end{proof}

According to Theorem \ref{threspolicy2}, the optimal action in state
$(x,1)$ with $n$ remaining transitions is twofold and prescribed by the pair
$(\aad_n(x), \aser_n(x))$. It is optimal to accept the incoming
customer if $x \leq B^d_n$ and reject him otherwise, whereas the optimal
service rate for the time interval until the next transition is
determined immediately after the admission action is taken, and the
service rate is determined by an increasing sequence of service thresholds $B^s_{n-1,0}\leq B^s_{n-1,1}\leq\ldots\leq B^s_{n-1,k}$ depending on the value of $x$. The difference in the subscript between the admission and
service thresholds is due to the fact that for the admission decision with $n$ remaining steps the relevant burden function is $\Delta_n(\cdot,0)$, while for the service rate decision it is $\Delta_{n-1}(\cdot,0)$.
On the other hand, in states $(x,0)$ the single optimal action is determined solely by
$\aser_n(x)$, the service rate to be employed until the next transition epoch.

In the remainder of the paper it will be useful to adopt an alternative viewpoint and
consider the optimal policy of admission/service control as pairs of
decisions both taken at departure epochs. Specifically let
\begin{equation*}
\act_{n+1}(x)=\left ( \aser_{n+1}(x),\aad_n(x) \right ).
\end{equation*}
The pair $\act_{n+1}(x)$ can be seen as a decision made at state $(x,0)$ with $n+1$ remaining steps,
prescribing: (i) the service rate to be
employed until the next transition epoch and (ii) whether to admit a new customer in the event that
the next transition is an arrival.
Thus, one may view the admission/rejection policy
as a sign posted at the entrance of the system after every departure
event. The sign specifies whether new arrivals are welcome to enter
the system or not. According to this interpretation, the pair $\act_{n+1}(x)$ specifies which
service rate will be employed when $n+1$ transitions remain, as well
as which sign will be posted at that instant.

An interesting question which is related to the issue of service flexibility discussed next, is which of the available service rates are indeed useful. The following proposition establishes the relative ordering between the admission and service thresholds according to the
values of economic parameters $R,\;c_j$ and service parameters $\mu_j$ for $j=1,\ldots,k$, which in part answers this question. More specifically, we let $j_0=\min\{j\geq1: R\leq\zeta_{j+1}\}$ and we show that the availability of service rates higher than $\mu_{j_0}$
is of limited value as a profit maximizing option. Indeed, whenever a service rate higher than $\mu_{j_0}$ is employed in a state $x$, the
rejection sign is posted for arrivals at the next decision epoch.

\begin{proposition}\label{optpol}
The following hold.
\begin{center}
    \begin{enumerate}
     \item[]
     \item[{\bf{i.}}] $B^d_{n}+1\leq B^s_{n,j}$ for any $j>j_0.$
     \item[{\bf{ii.}}]
\begin{equation*}
\act_{n+1}(x)=\left\{\begin{array}{ll}
                       (j,1),& 0\leq x\leq B^d_{n}\mbox{ and } B^s_{n,j-1}<x<B^s_{n,j},\mbox{ for } j=1,\ldots,j_0\\
                       (j,0),& x>B^d_{n}\mbox{ and } B^s_{n,j-1}<x<B^s_{n,j},\mbox{ for } j=j_0,\ldots,k
                      \end{array}
                      \right.
\end{equation*}
\end{enumerate}
\end{center}
\end{proposition}

\begin{proof}
Since $\zeta_1\leq\zeta_2\leq\ldots\leq\zeta_{k+1}$, we obtain that $\zeta_{j_0+1}<\zeta_{j+1}$ for any $j>j_0$.

Thus, $R\leq\zeta_{j_0+1}<\zeta_{j+1}$ for any $j>j_0$, and, it follows that $T_{\Delta_n(\cdot,0)}(R)\leq T_{\Delta_n(\cdot,0)}(\zeta_{j+1})$ for any $j>j_0$, since $\Delta_n(x,0)$ is non decreasing in $x$.

Therefore,
\begin{equation*}
1+B^d_n=1+T_{\Delta_n(\cdot,0)}(R)\leq 1+T_{\Delta_n(\cdot,0)}(\zeta_{j+1})=B^s_{n,j},
\end{equation*}
for any $j>j_0$.

The possible cases for $\act_{n+1}(x)$ follow immediately
given the inequality $1+B^d_n\leq B^s_{n,j}$ for any $j>j_0$.
\end{proof}

When service rate $\mu_j$ is optimal, then the sign posted is admit if $j<j_0$, and reject if $j>j_0$, while in the border case $j=j_0$ the decision depend on the specific value of $x$. Based on this property, we refer to the rates $\mu_{j_0+1},\ldots,\mu_k$ as useless, since if the initial state is below $B^s_{n,j_0}$ then they will never be used, otherwise some of them will be used only for a finite number of transitions until $x<B^s_{n,j_0}$.

It follows from Proposition \ref{optpol} that when $R\leq\zeta_{j+1}$ for some $j$, then service rate $\mu_j$ is useless. However, if $R>\zeta_{j+1}$, we cannot generally make the opposite statement, i.e., that this service rate brings beneficial flexibility. Indeed, in this case it can be shown similarly that $B^d_n+1\geq B^s_{n,j}$, which includes the possibility of $B^d_n+1=B^s_{n,j}$ and falls in the case of Proposition \ref{optpol}(i). Therefore, that $R > \zeta_{j+1}$, i.e., that the cost of $\mu_{j+1}$ relative to $\mu_j$ is sufficiently low compared to the service revenue, is a necessary but not sufficient condition for this rate to be useful.

This leads to the question of the value of the service rate switch option in general.
In the following section, we explore more thoroughly this issue, by analyzing the value of service flexibility as a function of the system state.

\section{The Value of Service Flexibility}
\label{sec-VoSF}
Motivated by the discussion at the end of the previous section, we next explore how the service rate switch option affects, in terms of profit and admission thresholds, the system where only admission control is employed at the lowest rate.
In terms of profit, we show that the option to increase service capacity becomes more profitable as the system congestion increases, whereas in terms of admission thresholds, the service flexibility ensures that more customers could be accepted.

To assess the value of service rate flexibility, we note that the
restriction of the combined problem described in
\eqref{DTMDP1}-\eqref{DTMDP-int}, to the class of policies where the
service rate is always set to lowest mode, i.e. $\mu_1$, is equivalent to a pure
admission control subproblem (for a typical admission control
formulation see \cite{puterman}, p.p.568-571).  This restriction
corresponds to the following set of optimality equations in finite horizon:
\begin{eqnarray}
  \hat{\upsilon}_{n+1}(x,1)&=&\max\{R+\hat{\upsilon}_{n+1}(x+1,0),\hat{\upsilon}_{n+1}(x,0)\},\; x\geq0\label{SerFlexEq1} \\
  \hat{\upsilon}_{n+1}(x,0)&=&-h(x)-c_1+\lambda\hat{\upsilon}_{n}(x,1)+\mu_1\hat{\upsilon}_{n}(x-1,0)
  +(\mu_k-\mu_1)\hat{\upsilon}_{n}(x,0),\; x>0\label{SerFlexEq2} \\
  \hat{\upsilon}_{n+1}(0,0)&=&-h(0)-c_1+\lambda\hat{\upsilon}_{n}(0,1)+\mu_k\hat{\upsilon}_{n}(0,0)\label{SerFlexEq3}\\
  \hat{\upsilon}_{0}(x,i)&=&0,\;\;x\geq0, i\in\{0,1\},\label{SerFlexEqint}
\end{eqnarray}
 where $\hat{\upsilon}_{n}(x,i)$ denotes the maximum
discounted net profit for the remaining $n$ transitions, when the
service rate is set to $\mu_1$ and admission is dynamically
controlled. In the following we will refer to the restricted problem
as the admission control subproblem.

Let $\aadh_{n}(x)$ be the optimal decision in state $(x,1)$.
From $\eqref{SerFlexEq1}$ to $\eqref{SerFlexEq3}$ it follows that
\begin{equation}\label{dec-AC}
 \aadh_{n+1}(x)=\left\{\begin{array}{ll}
                              1, & \hbox{if } \hat{\Delta}_{n+1}(x,0)\leq R\\
                              0, & \hbox{if } \hat{\Delta}_{n+1}(x,0)> R\\
                           \end{array}
                           \right.
\end{equation}
 for $x\geq0$, where
\begin{equation*}
\hat{\Delta}_{n}(x,i)=\hat{\upsilon}_{n}(x,i)-\hat{\upsilon}_{n}(x+1,i)
\end{equation*}
 denotes the burden in terms of expected profit reduction
that an additional customer brings to the defined
system.

Similarly to Lemma \ref{burdenprp1} and Theorem \ref{threspolicy2}, it can be shown that $\hat{\upsilon}_n(x,i)$ is
non increasing and concave in $x$, or equivalently that $\hat{\Delta}_n(x,i)$ is nonnegative and nondecreasing in $x$.

Therefore, the optimal admission rule is characterized by
admission thresholds
\begin{equation*}
\hat{B}^d_n=T_{\hat{\Delta}_n(\cdot,0)}(R)
\end{equation*}
so that $\aadh_{n+1}(x)=1$ if and only if
$x\leq\hat{B}^d_{n+1},\;x\geq0,\;n=0,1,\ldots$.  The threshold
structure of the optimal policy is not new (see e.g. \cite{walrand}, p.278,
\cite{puterman}, p.568). We restate it here in a notation that allows
comparison with the combined problem.

Now, we can define the value of service flexibility as the benefit that the
system administrator obtains from using the service rate switch option,
i.e.,
$\hat{\epsilon}_{n}(x,i)=\upsilon_{n}(x,i)-\hat{\upsilon}_{n}(x,i)$.
It is immediate that $\hat{\epsilon}_{n}(x,i)\geq0$, for all $x,i,n$.

In the next theorem, we first prove that $\hat{\epsilon}_{n}(x,i)$ is nondecreasing in the system length $x$, thus the
option to switch to a higher service rate is more useful as the queue
becomes longer, which is intuitively expected.  Moreover, this is
equivalent to the fact that the burden that an additional customer
imposes on the system is lower when the option to switch to a higher service rate is
available, compared to the pure admission control subproblem. This is
also intuitive, since by increasing the service rate, it is possible
to alleviate the extra delay because of the additional customer.

Secondly, we show that the admission thresholds are increased when the service rate switch is available, thus a customer who would not
be accepted in the restricted system may be accepted when the system manager has the flexibility to switch to a higher service rate.

\begin{theorem}\label{SerFlexincr}
\begin{enumerate}
\item[]
\item[{\bf{i.}}] $\hat{\epsilon}_{n}(x,i)$, is nondecreasing in $x$ for all $n, i$.
\item[{\bf{ii.}}] $\hat{B}^d_n\leq B^d_n$ for all $n$.
\end{enumerate}
\end{theorem}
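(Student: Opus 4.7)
The plan is to reduce both parts to the single pointwise inequality
$\Delta_n(x,i)\leq\hat{\Delta}_n(x,i)$ for every $n\geq 0$, $x\geq 0$ and $i\in\{0,1\}$,
capturing the intuition that the marginal congestion cost is smaller in the full problem than in the admission-only subproblem. Part (i) is exactly this inequality, since by definition $\hat{\epsilon}_n(x+1,i)-\hat{\epsilon}_n(x,i)=\hat{\Delta}_n(x,i)-\Delta_n(x,i)$. Part (ii) then follows from the $i=0$ version via property (b) of the generic threshold $T_f$: from $\Delta_n(\cdot,0)\leq\hat{\Delta}_n(\cdot,0)$ we get $B^d_n=T_{\Delta_n(\cdot,0)}(R)\geq T_{\hat{\Delta}_n(\cdot,0)}(R)=\hat{B}^d_n$.

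I would prove the pointwise inequality by induction on $n$, the base case $n=0$ being trivial since $\Delta_0=\hat{\Delta}_0=0$. In the inductive step I treat the departure state $(x,0)$ first. For $x\geq 1$ the service-choice max in \eqref{DTMDP2} can be rewritten as $\delta\upsilon_n(x,0)+\max\{0,\delta\Delta_n(x-1,0)-c\}$, yielding a closed expression for $\Delta_{n+1}(x,0)$. Subtracting the analogous expression for $\hat{\Delta}_{n+1}(x,0)$ obtained from \eqref{SerFlexEq2} leaves three induction-hypothesis terms of the form $\hat{\Delta}_n(\cdot,\cdot)-\Delta_n(\cdot,\cdot)\geq 0$, plus a residual $\max\{0,\delta\Delta_n(x,0)-c\}-\max\{0,\delta\Delta_n(x-1,0)-c\}$ that is nonnegative by the monotonicity of $\Delta_n(\cdot,0)$ established in Theorem \ref{threspolicy2}. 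The boundary $x=0$ is handled separately from \eqref{DTMDP3} and \eqref{SerFlexEq3}, where the corresponding residual $\delta\hat{\Delta}_n(0,0)-\min\{\delta\Delta_n(0,0),c\}$ is again nonnegative by induction.

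For the arrival state $(x,1)$ at step $n+1$, I exploit the threshold characterization of Theorem \ref{threspolicy2}: a direct computation from \eqref{DTMDP1} gives
\[
\Delta_{n+1}(x,1)=\begin{cases}\Delta_{n+1}(x+1,0)&x+1\leq B^d_{n+1},\\ R & x=B^d_{n+1},\\ \Delta_{n+1}(x,0)&x>B^d_{n+1},\end{cases}
\]
with the analogous piecewise formula for $\hat{\Delta}_{n+1}(x,1)$ in terms of $\hat{B}^d_{n+1}$. The $i=0$ step above already yields $\hat{B}^d_{n+1}\leq B^d_{n+1}$, so I split $x$ into the regions $x+1\leq\hat{B}^d_{n+1}$, $\hat{B}^d_{n+1}\leq x\leq B^d_{n+1}$, and $x>B^d_{n+1}$. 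In the two extreme regions the conclusion reduces directly to the $i=0$ inequality. The main technical obstacle is the intermediate region, where the two piecewise formulas select different branches; here the defining threshold inequalities $\Delta_{n+1}(\cdot,0)\leq R$ below $B^d_{n+1}$ and $\hat{\Delta}_{n+1}(\cdot,0)>R$ above $\hat{B}^d_{n+1}$ squeeze both sides against $R$, yielding $\Delta_{n+1}(x,1)\leq R\leq\hat{\Delta}_{n+1}(x,1)$ after a short case check and closing the induction.
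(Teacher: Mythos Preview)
Your proposal is correct and follows essentially the same approach as the paper: induction on $n$, handling $i=0$ first via the identity $\hat{\epsilon}_{n+1}(x,0)=\lambda\hat{\epsilon}_n(x,1)+\mu_l\hat{\epsilon}_n(x-1,0)+\delta\hat{\epsilon}_n(x,0)+(\delta\Delta_n(x-1,0)-c)^+$ together with the monotonicity of $\Delta_n(\cdot,0)$ from Theorem~\ref{threspolicy2}, then deducing $\hat{B}^d_{n+1}\leq B^d_{n+1}$ from property (b) of $T_f$, and finally doing the $i=1$ case by a piecewise split on the two admission thresholds. The only cosmetic difference is that you phrase the argument in terms of the first differences $\hat{\Delta}_n-\Delta_n$ while the paper works directly with the monotonicity of $\hat{\epsilon}_n$, which are equivalent formulations.
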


\begin{proof}
Note that {\bf{i.}} is equivalent to
$\Delta_n(x,i)\leq\hat{\Delta}_n(x,i)$ for any
$i\in\{0,1\}$ and $n=0,1,\ldots$.

The proof is by induction on $n$. For $n=0$, {\bf{i.}} is immediate since
$\hat{\epsilon}_0(x,i)=0$, by initial conditions \eqref{DTMDP-int}
and \eqref{SerFlexEqint}.

Now suppose that {\bf{i.}} holds for some $n$. We consider
the following cases:

Case 1: $i=0$.
For $x>0$ by equations \eqref{DTMDP2} and
\eqref{SerFlexEq2}, we obtain
\begin{eqnarray*}
\hat{\epsilon}_{n+1}(x,0) &=& \upsilon_{n+1}(x,0)-\hat{\upsilon}_{n+1}(x,0) \\
&=& -h(x)+\lambda\upsilon_{n}(x,1)+\max_{j=1,\ldots,k}\{-c_j+\mu_j\upsilon_{n}(x-1,0)+(\mu_k-\mu_j)\upsilon_{n}(x,0)\}\\
&&+h(x)+c_1-\lambda\hat{\upsilon}_{n}(x,1)-\mu_1\hat{\upsilon}_{n}(x-1,0)-(\mu_k-\mu_1)\hat{\upsilon}_{n}(x,0)\\
&=& \lambda\hat{\epsilon}_{n}(x,1)+\mu_1\hat{\epsilon}_{n}(x-1,0)+(\mu_k-\mu_1)\hat{\epsilon}_{n}(x,0)\\
&&+\max_{j=2,\ldots,k}\left\{\left\{c_1-c_j+(\mu_j-\mu_1)\Delta_{n}(x-1,0)\right\}^{+}\right\},
\end{eqnarray*}
where $\{\}^{+}$ refers to the positive part of the corresponding quantity.

Since $\Delta_n(x-1,0)$ is nondecreasing in $x$, it follows that the maximum function of non decreasing functions is non decreasing in $x$ and the monotonicity of  $\hat{\epsilon}_{n+1}(x,0)$ for $x>0$ follows readily by the induction hypothesis.

Finally, for $x=0$, we derive the following after some algebra
\begin{equation*}
\hat{\epsilon}_{n+1}(0,0)=\upsilon_{n+1}(0,0)-\hat{\upsilon}_{n+1}(0,0)=
\lambda\hat{\epsilon}_{n}(0,1)+\mu_k\hat{\epsilon}_{n}(0,0)
\end{equation*}
\begin{eqnarray*}
\hat{\epsilon}_{n+1}(1,0) &=& \upsilon_{n+1}(1,0)-\hat{\upsilon}_{n+1}(1,0) \\
&=& \lambda\hat{\epsilon}_{n}(1,1)+\mu_1\hat{\epsilon}_{n}(0,0)+(\mu_k-\mu_1)\hat{\epsilon}_{n}(1,0)\\
&&+\max_{j=2,\ldots,k}\left\{\left\{c_1-c_j+(\mu_j-\mu_1)\Delta_{n}(0,0)\right\}^{+}\right\}
\end{eqnarray*}
From the above equations it follows that $\hat{\epsilon}_{n+1}(1,0)-\hat{\epsilon}_{n+1}(0,0)\geq0$, by the induction hypothesis.

Therefore, $\hat{\epsilon}_{n+1}(x,0)$ is nondecreasing in
$x$, for $x\geq0$, thus
$\Delta_{n+1}(x,0)\leq\hat{\Delta}_{n+1}(x,0)$. By property (b) of
the generic threshold-type function, $T_f(\theta)$, we obtain that
\begin{equation*}
T_{\hat{\Delta}_{n+1}(\cdot,0)}(R)\leq
T_{\Delta_{n+1}(\cdot,0)}(R)\Rightarrow\hat{B}^d_{n+1}\leq B^d_{n+1},
\end{equation*}
thus {\bf{ii.}} holds for $n+1$.

Case 2: $i=1$.
From the optimality equations $\eqref{DTMDP1},\;
\eqref{SerFlexEq1}$ and the property $\hat{B}^d_{n+1}\leq B^d_{n+1}$,
$\hat{\epsilon}_{n+1}(x,1)$ can be written as
\begin{eqnarray}
\hat{\epsilon}_{n+1}(x,1)&=& \upsilon_{n+1}(x,1)-\hat{\upsilon}_{n+1}(x,1) \nonumber \\
&=& \left\{\begin{array}{ll}
             \hat{\epsilon}_{n+1}(x+1,0),& x\leq\hat{B}^d_{n+1}\\
             R+\upsilon_{n+1}(x+1,0)-\hat{\upsilon}_{n+1}(x,0),&\hat{B}^d_{n+1}<x\leq B^d_{n+1}\\
            \hat{\epsilon}_{n+1}(x,0),&x\geq B^d_{n+1}+1\\
        \end{array}
        \right. \label{SerFlexind1}
\end{eqnarray}
To show monotonicity we consider further subcases for $x$.

 {\textit{Case 2a:}} $x\leq\hat{B}^d_{n+1}$.
 For $x\leq\hat{B}^d_{n+1}-1$ by \eqref{SerFlexind1} we
obtain that $\hat{\epsilon}_{n+1}(x,1)=\hat{\epsilon}_{n+1}(x+1,0)$,
which is nondecreasing in $x$, from Case 1.

 For $x=\hat{B}^d_{n+1}$ we have to prove that
$\hat{\epsilon}_{n+1}(\hat{B}^d_{n+1},1)\leq\hat{\epsilon}_{n+1}(\hat{B}^d_{n+1}+1,1)$.

 Indeed,
\begin{eqnarray*}
\lefteqn{\hat{\epsilon}_{n+1}(\hat{B}^d_{n+1},1)-\hat{\epsilon}_{n+1}(\hat{B}^d_{n+1}+1,1)=}\\
& &\upsilon_{n+1}(\hat{B}^d_{n+1}+1,0)-\hat{\upsilon}_{n+1}(\hat{B}^d_{n+1}+1,0) \\
& &-R-\upsilon_{n+1}(\hat{B}^d_{n+1}+2,0)+\hat{\upsilon}_{n+1}(\hat{B}^d_{n+1}+1,0)\\
&=&\Delta_{n+1}(\hat{B}^d_{n+1}+1,0)-R\leq0,
\end{eqnarray*}
 by definition of $\hat{B}^d_{n+1}$.\\
 {\textit{Case 2b:}} $\hat{B}^d_{n+1}<x\leq B^d_{n+1}$.\\
 For $\hat{B}^d_{n+1}+1<x\leq B^d_{n+1}-1$, it follows from
\eqref{SerFlexind1} that,
\begin{eqnarray*}
\lefteqn{\hat{\epsilon}_{n+1}(x,1)-\hat{\epsilon}_{n+1}(x+1,1)=} \\
&&R+\upsilon_{n+1}(x+1,0)-\hat{\upsilon}_{n+1}(x,0)\\
&&-R-\upsilon_{n+1}(x+2,0)+\hat{\upsilon}_{n+1}(x+1,0)\\
&=&\Delta_{n+1}(x+1,0)-\hat{\Delta}_{n+1}(x,0)\leq0,
\end{eqnarray*}
 from Case 1.

 For $x=B^d_{n+1}$ we must show that $\hat{\epsilon}_{n+1}(B^d_{n+1},1)\leq\hat{\epsilon}_{n+1}(B^d_{n+1}+1,1)$.

 Again from \eqref{SerFlexind1} we obtain
\begin{eqnarray*}
\lefteqn{\hat{\epsilon}_{n+1}(B^d_{n+1},1)-\hat{\epsilon}_{n+1}(B^d_{n+1}+1,1) =} \\
&& R+\upsilon_{n+1}(B^d_{n+1}+1,0)-\hat{\upsilon}_{n+1}(B^d_{n+1},0)\\
&&-\upsilon_{n+1}(B^d_{n+1}+1,0)+\hat{\upsilon}_{n+1}(B^d_{n+1}+1,0)\\
&=&R-\hat{\Delta}_{n+1}(B^d_{n+1},0)\leq0,
\end{eqnarray*}
 by definition of $B^d_{n+1}$.\\
 {\textit{Case 2c:}} $x\geq B^d_{n+1}+1$.

 For $x\geq B^d_{n+1}+1$ the monotonicity of
$\hat{\epsilon}_{n+1}(x,1)$ it is immediate by
\eqref{SerFlexind1}.

 Thus, we have shown the monotonicity of
$\hat{\epsilon}_{n+1}(x,1)$ in $x$ for the case
$\hat{B}^d_{n+1}<B^d_{n+1}$.
 It remains to examine the case $\hat{B}^d_{n+1}=B^d_{n+1}$.
Then the middle range in \eqref{SerFlexind1} disappears and it is
left to show that, for
$x=B^d_{n+1}:\;\hat{\epsilon}_{n+1}(B^d_{n+1}+1,1)\geq\hat{\epsilon}_{n+1}(B^d_{n+1},1)$.

 Once again by \eqref{SerFlexind1} we obtain
\begin{equation*}
\hat{\epsilon}_{n+1}(B^d_{n+1}+1,1)-\hat{\epsilon}_{n+1}(B^d_{n+1},1)=\hat{\epsilon}_{n+1}(B^d_{n+1}+1,0)-\hat{\epsilon}_{n+1}(B^d_{n+1}+1,0)=0.
\end{equation*}
 Therefore $\hat{\epsilon}_{n+1}(x,1)$ is nondecreasing in $x$.
\end{proof}

Now that the properties of $\hat{\epsilon}_n(x,i)$ have been shown for
the finite horizon version of the problem, it is natural to ask how
the results of Proposition \ref{optpol} are related to Theorem
\ref{SerFlexincr}. In particular, one might conjecture that if $j_0=1$, then $\hat{\epsilon}_n(x,0)=0$ and
$B^d_n=\hat{B}^d_n$, since the service rate of the admission control sub-problem is set at the lowest rate $\mu_1$. However, this may not be generally true for the following reason. If $j_0=1$, then service rates higher than $\mu_1$ are not used in states where customers are admitted. However, they
may still be used in initial states with large $x$ although new arrivals are
rejected, in order to empty the queue faster and reduce the holding
costs. Therefore $\hat{\epsilon}_n(x,0)$ could still be positive in
such states. Furthermore, even for states with $x$ small enough so
that the optimal service rate is $\mu_1$ in the combined problem, i.e.,
for $x\leq B_n^d$ it is not obvious that $\hat{\epsilon}_n(x,0)=0$. If
the thresholds could be shown to be monotone with respect to the
number of periods $n$, then the above could be shown by induction,
however this monotonicity may not be true in general.

On the other hand, it has been shown in Theorem
\ref{VerificationTheorem} that, under fairly general conditions on the
holding cost function $h(x)$, the finite horizon problems converge as
$n \to \infty$, to the infinite horizon problem, for which the optimal
policy is stationary. For this limiting problem it is possible to
prove an interesting relationship between Proposition \ref{optpol} and
Theorem \ref{SerFlexincr}, as we do next.

\subsection{Service Rate Flexibility Under Infinite Horizon}\label{subsec-Asym_Res}
In this subsection we show that if $j_0=1$, i.e. $R\leq\zeta_2$, the value
of service flexibility is essentially of no value in low congestion
states and the optimal admission thresholds are the same with and
without the service rate switch option in the framework of the
infinite horizon discounted problem.

Consider the infinite horizon problem and assume that the holding cost
function satisfies the conditions of Theorem
\ref{VerificationTheorem}. It follows that the value functions of the
combined and the admission control subproblems converge to their
infinite horizon counterparts $\upsilon(\cdot, \cdot),
\hat{\upsilon}(\cdot, \cdot)$, which retain the monotonicity and
concavity properties proved for finite $n$. Thus, the infinite horizon
optimal policies are still threshold-based with time stationary
thresholds, i.e., there exist $B^s_j$ for $j=1,\ldots,k$, and $B^d, \hat{B}^d$ such that the
optimal policy $a=a(B^s_1, B^s_2,\ldots, B^s_k, B^d)$ for the combined problem is
$\aser(x,i) = j$, if and only if, $B^s_{j-1}<x\leq B^s_j$ for $j=1,\ldots,k$ with $B^s_0=0$ and $B^s_k=+\infty$, and
\ \
$\aad(x,1) =1$, if and only if, $x\leq B^d$, and the optimal policy $\hat{a}=\hat{a}(\hat{B}^d)$ for the admission control subproblem is
$\aadh(x,1) = 1$, if and only if, $x\leq \hat{B}^d$.


Furthermore, the service rate flexibility $\hat{\epsilon}(x,i) =
\upsilon(x, i) - \hat{\upsilon}(x, i)$ is nondecreasing in $x$ and
$\hat{B}^d \leq B^d$.

We thus restrict attention to the class of stationary threshold-type
policies.  Let $\pi = \pi(b^s_1,b^s_2,\ldots,b^s_k,b^d)$ be any (not necessarily optimal)
stationary threshold policy for the combined problem, prescribing
actions $\piser(x,i) = j$ if and only if $b^s_{j-1}<x\leq b^s_j$ for $j=1,\ldots,k$, and $\piad(x,i)
= 1$ if and only if $x\leq b^d$.  A threshold policy $\hat{\pi}=
\hat{\pi}(b^d)$ for the admission control subproblem can be defined
similarly.  Finally let $\upsilon_{\pi}$ denote the infinite horizon
discounted profit function for the combined problem under threshold
policy $\pi$, and $\hat{\upsilon}_{\hat{\pi}}$ the corresponding function
for the admission control subproblem under threshold policy
$\hat{\pi}$.

Now consider a threshold policy $\pi(b^s_1,b^s_2,\ldots,b^s_k,b^d)$ with thresholds $b^s_1
\geq b^d+1$ and the corresponding policy $\hat{\pi}(b^d)$ for the
admission control subproblem with the same admission threshold. The
following lemma shows that under these two policies the value
functions of the two control problems coincide for all reachable
states $x$ where the service rate $\mu_1$ is used.

\begin{lemma}\label{thresvalfnc} For any $\pi=\pi(b^s_1,b^s_2,\ldots,b^s_k,b^d),\;\hat{\pi}=\hat{\pi}(b^d)$ such that: $b^s_1\geq b^d+1$, then
\begin{equation*}
\upsilon_{\pi}(x,i)=\hat{\upsilon}_{\hat{\pi}}(x,i),\;\;x=0,1,\ldots,b^s_1,\;i=0,1.
\end{equation*}
\end{lemma}

\begin{proof}
For the infinite horizon discounted profit maximization problem the value function corresponding to stationary policy $\pi$ can be found as the unique solution to a system of linear equations corresponding to the policy evaluation step of the policy iteration method. Specifically, for policy $\pi$ with $b^s_1 > b^d$, the policy evaluation equations are
\begin{eqnarray*}
  \upsilon_{\pi}(0,0)&=&-h(0)-c_1+\lambda\upsilon_{\pi}(0,1)+\mu_k\upsilon_{\pi}(0,0) \\ 
  \upsilon_{\pi}(x,0)&=&-h(x)-c_1+\lambda\upsilon_{\pi}(x,1)+\mu_1\upsilon_{\pi}(x-1,0)+(\mu_k-\mu_1)\upsilon_{\pi}(x,0),\;x=1,\ldots,b^s_1   \\ 
  \upsilon_{\pi}(x,1)&=&R+\upsilon_{\pi}(x+1,0),\;x=0,\ldots,b^d \\ 
  \upsilon_{\pi}(x,1)&=&\upsilon_{\pi}(x,0),\;x=b^d+1,\ldots,b^s_1\\ 
  \upsilon_{\pi}(x,0)&=&-h(x) -c_j +\lambda \upsilon_{\pi}(x,1) + \mu_j \upsilon_{\pi}(x-1,0)+(\mu_k-\mu_j)\upsilon_{\pi}(x,0), b^s_{j-1}< x \leq b^s_j,\\&& \mbox{ for }j=2,\ldots,k\\ 
  \upsilon_{\pi}(x,1)&=&\upsilon_{\pi}(x,0),\;x > b^s_1 
\end{eqnarray*}
In the above system, the $2(b^s_1+1)$ values $\upsilon_{\pi}(x,i), x=0,\ldots,b^s_1, i=0,1$  are the unique solution to the first $2(b^s_1+1)$ equations for $i=0,1$ and $x=0,\ldots,b^s_1$. It is also easy to see that each of the remaining quantities $\upsilon_{\pi}(x,i)$ for $i=0,1$ and $x>b^s_1$ can be obtained recursively as a function of $\upsilon_{\pi}(b^s_1,0)$, from the remaining equations.

Similarly, for policy $\hat{\pi}(b^d)$ the policy evaluation equations of the admission control subproblem are
\begin{eqnarray*}
  \hat{\upsilon}_{\hat{\pi}}(0,0)&=&-h(0)-c_1+\lambda\hat{\upsilon}_{\hat{\pi}}(0,1)+\mu_k\hat{\upsilon}_{\hat{\pi}}(0,0)\label{infSerFlexEq3}\\
  \hat{\upsilon}_{\hat{\pi}}(x,0)&=&-h(x)-c_1+\lambda\hat{\upsilon}_{\hat{\pi}}(x,1)+\mu_1\hat{\upsilon}_{\hat{\pi}}(x-1,0)+(\mu_k-\mu_1)\hat{\upsilon}_{\hat{\pi}}(x,0),\; x=1,\ldots,b^s_1\label{infSerFlexEq2} \\
  \hat{\upsilon}_{\hat{\pi}}(x,1)&=&R+\hat{\upsilon}_{\hat{\pi}}(x+1,0),\;x=0,\ldots,b^d\label{infSerFlexEq1B}\\
  \hat{\upsilon}_{\hat{\pi}}(x,1)&=&\hat{\upsilon}_{\hat{\pi}}(x,0),\;x=b^d+1,\ldots,b^s_1\label{infSerFlexEq1A}\\
  \hat{\upsilon}_{\hat{\pi}}(x,0)&=&-h(x)+ \lambda \hat{\upsilon}_{\hat{\pi}}(x,1) + \mu_l \hat{\upsilon}_{\hat{\pi}}(x-1,0) + \delta \hat{\upsilon}_{\hat{\pi}}(x,0), \; x > b^s_1\\ 
  \hat{\upsilon}_{\hat{\pi}}(x,1)&=&\hat{\upsilon}_{\hat{\pi}}(x,0),\;x > b^s_1 
\end{eqnarray*}
and the $2(b^s_1+1)$ values $\hat{\upsilon}_{\hat{\pi}}(x,i), x=0,\ldots,b^s_1, i=0,1$  are the unique solution to the first $2(b^s_1+1)$ equations for $i=0,1$ and $x=0,\ldots,b^s_1$.

We finally note that the first $2(b^s_1+1)$ equations are identical in the two problems above, therefore
$\upsilon_{\pi}(x,i)=\hat{\upsilon}_{\hat{\pi}}(x,i)$, for $x=0,1,\ldots,b^s_1,\;i=0,1$.
\end{proof}

In the next proposition we make use of Lemma \ref{thresvalfnc} to show that if the optimal policy\\
$a(B^s_1,\ldots,B^s_k, B^d)$ for the combined problem is such that $B^s_1 \geq B^d+1$, then the service rate flexibility is equal to zero for states with $x\leq B^s_1$. Furthermore, the optimal admission threshold for the admission control subproblem is equal to that for the combined problem.

\begin{proposition}\label{cntcombadm}
If $B^s_1\geq B^d+1$, then $\hat{B}^{d}=B^d$, and
$\hat{\epsilon}(x,i)=0,\;i=0,1,\;x=0,1,\ldots,B^s_1.$
\end{proposition}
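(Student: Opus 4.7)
The plan is to combine Lemma \ref{thresvalfnc} with a sandwich argument using the optimality of the value functions in the two problems. First I would consider the optimal stationary policy $\pi = \pi(B^s, B^d)$ for the combined problem, which under the hypothesis $B^s \geq B^d+1$ satisfies the structural condition of Lemma \ref{thresvalfnc}, paired with the threshold policy $\hat{\pi} = \hat{\pi}(B^d)$ for the admission control subproblem that uses the same admission threshold. Lemma \ref{thresvalfnc} then yields
\begin{equation*}
\upsilon_{\pi}(x,i) = \hat{\upsilon}_{\hat{\pi}}(x,i), \qquad x = 0,1,\ldots,B^s,\ i=0,1.
\end{equation*}

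Next I would invoke optimality on each side. Since $\pi$ is optimal for the combined problem, $\upsilon_{\pi}(x,i) = \upsilon(x,i)$. Since $\hat{\pi}(B^d)$ is a feasible (but not necessarily optimal) policy for the admission control subproblem, $\hat{\upsilon}_{\hat{\pi}}(x,i) \leq \hat{\upsilon}(x,i)$. Combining, $\upsilon(x,i) \leq \hat{\upsilon}(x,i)$ for $x \leq B^s$. Coupled with the trivial direction $\hat{\upsilon} \leq \upsilon$ (the combined problem dominates the restricted one), this forces equality, i.e., $\hat{\epsilon}(x,i) = \upsilon(x,i) - \hat{\upsilon}(x,i) = 0$ for $x = 0,\ldots,B^s$.

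Finally, for the threshold identity $\hat{B}^d = B^d$, the inequality $\hat{B}^d \leq B^d$ is already part \textbf{ii} of Theorem \ref{SerFlexincr}. For the reverse direction, I would observe that under the hypothesis both $B^d$ and $B^d+1$ lie in the range $\{0,\ldots,B^s\}$ on which the two value functions agree, hence
\begin{equation*}
\hat{\Delta}(B^d,0) = \hat{\upsilon}(B^d,0) - \hat{\upsilon}(B^d+1,0) = \upsilon(B^d,0) - \upsilon(B^d+1,0) = \Delta(B^d,0) \leq R
\end{equation*}
by the very definition of $B^d$ as $T_{\Delta(\cdot,0)}(R)$. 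By the definition of $\hat{B}^d$ this gives $\hat{B}^d \geq B^d$, closing the loop.

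The main obstacle is simply making sure the boundary case is handled: in the argument for $\hat{B}^d \geq B^d$ I use the value functions at both $B^d$ and $B^d+1$, and the hypothesis $B^s \geq B^d+1$ is exactly what guarantees that both indices fall within the range on which Lemma \ref{thresvalfnc} supplies equality; no separate treatment of the case $B^s = B^d+1$ is then necessary.
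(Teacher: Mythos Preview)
Your proposal is correct and follows essentially the same route as the paper: apply Lemma \ref{thresvalfnc} to the pair $(\pi(B^s,B^d),\hat{\pi}(B^d))$, use the sandwich $\hat{\upsilon}_{\hat{\pi}}\leq\hat{\upsilon}\leq\upsilon=\upsilon_\pi$ to obtain $\hat{\epsilon}(x,i)=0$ on $\{0,\ldots,B^s\}$, and then exploit the resulting equality of value functions on that range to pin down $\hat{B}^d=B^d$. The only cosmetic difference is in the last step: the paper argues by contradiction at the state $\hat{B}^d+1$, whereas you verify directly that $\hat{\Delta}(B^d,0)=\Delta(B^d,0)\leq R$ at the state $B^d$; both rely on the same fact that $B^d$ and $B^d+1$ are within the range where the two value functions coincide.
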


\begin{proof}
We have shown that the optimal admission thresholds generally satisfy $\hat{B}^{d}\leq B^d$.
Assume that the optimal policy for the combined problem $a(B^s_1,\ldots,B^s_k, B^d)$ satisfies $B^s_1\geq B^d+1$.

Consider the policy $\hat{\pi}=\hat{\pi}(B^d)$ for the admission control subproblem that applies admission threshold $B^d$. From Lemma \ref{thresvalfnc} it follows that
$\upsilon_a(x,i) = \hat{\upsilon}_{\hat{\pi}}(x,i)$, for $x=0,\ldots, B^s_1, i=0,1$.
However, $\upsilon(x,i)= \upsilon_a(x,i),\ \hat{\upsilon}_{\hat{\pi}}(x,i)\leq \hat{\upsilon}(x,i) \leq \upsilon(x,i)$ for all $(x,i)$. It follows that $\upsilon(x,i) = \hat{\upsilon}(x,i)$, thus $\hat{\epsilon}(x,i)=0$, for $x=0,\ldots, B^s_1, i=0,1$.

To show that $\hat{B}^d=B^d$, suppose that the optimal policy $\hat{a}(\hat{B}^d)$ for the admission control subproblem is such that $\hat{B}^{d}<  B^d$ and consider state $(\hat{B}^d+1, 1)$. By the definition of $\hat{B}^d$ it follows that admitting a customer in this state is strictly suboptimal for the admission control subproblem, i.e.,
$$
R+\hat{\upsilon}(\hat{B}^d+2, 0) < \hat{\upsilon}(\hat{B}^d+1, 0).
$$
On the other hand, for the combined problem admitting the customer in this state is optimal, i.e.,
$$
R+\upsilon(\hat{B}^d+2, 0) \geq \upsilon(\hat{B}^d+1, 0).
$$
However, $\hat{B}^d+2 \leq B^d +1 \leq B^s$, thus, as we have shown above, $\upsilon(\hat{B}^d+2, 0) = \hat{\upsilon}(\hat{B}^d+2, 0)$ and $\hat{\upsilon}(\hat{B}^d+1, 0) = \upsilon(\hat{B}^d+1, 0)$. Therefore the two inequalities above lead to a contradiction and we conclude that $\hat{B}^d = B^d$.
\end{proof}

We can now show that, for the infinite horizon case, Proposition \ref{optpol} complements Theorem \ref{SerFlexincr}, in the sense that $R \leq \zeta_2$ actually implies that adding the service rate switch possibility does not affect the admission threshold, and the value of flexibility is equal to zero for states with low congestion.
This result is an immediate consequence of Propositions \ref{optpol} and \ref{cntcombadm}.

\begin{theorem}\label{vof_novalue}
If $R\leq\zeta_2$, then $\hat{B}^{d}=B^d$, and
$\hat{\epsilon}(x,i)=0,\;i=0,1,\;x=0,1,\ldots,B^s_1.$
\end{theorem}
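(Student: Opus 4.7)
The plan is to observe that Theorem \ref{vof_novalue} is an immediate composition of Propositions \ref{optpol} and \ref{cntcombadm}, once the former is transferred from the finite horizon to the infinite horizon setting. Specifically, Proposition \ref{cntcombadm} already gives the desired conclusion whenever the infinite horizon thresholds satisfy $B^s \geq B^d + 1$, so all that remains is to establish this inequality under the hypothesis $R \leq c/\delta$.

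First I would argue that the inequality $B^s \geq B^d + 1$ holds in the infinite horizon under $R \leq c/\delta$. Recall that the infinite horizon value function $\upsilon(x,0)$ is the limit of $\upsilon_n(x,0)$ by Theorem \ref{VerificationTheorem}, and retains the nonincreasing, concave structure in $x$, so $\Delta(x,0) = \upsilon(x,0) - \upsilon(x+1,0)$ is nonnegative and nondecreasing in $x$. The stationary thresholds admit the same representation as in the finite horizon case,
\begin{equation*}
B^s = 1 + T_{\Delta(\cdot,0)}\!\left(\tfrac{c}{\delta}\right), \qquad B^d = T_{\Delta(\cdot,0)}(R).
\end{equation*}
By the monotonicity property (a) of $T_f(\theta)$, the hypothesis $R \leq c/\delta$ yields $T_{\Delta(\cdot,0)}(R) \leq T_{\Delta(\cdot,0)}(c/\delta)$, hence $B^d \leq B^s - 1$, i.e., $B^s \geq B^d + 1$. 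This is exactly the infinite horizon analogue of Proposition \ref{optpol}, proved by the same one-line argument.

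Having established $B^s \geq B^d+1$, I would directly invoke Proposition \ref{cntcombadm}, which concludes that $\hat{B}^d = B^d$ and $\hat{\epsilon}(x,i) = 0$ for all $i \in \{0,1\}$ and $x = 0, 1, \ldots, B^s$. This completes the proof.

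I do not foresee a genuine obstacle: the entire content of the theorem is the juxtaposition of the threshold ordering from Proposition \ref{optpol} and the coincidence of value functions from Proposition \ref{cntcombadm}. The only subtlety worth flagging is that Proposition \ref{optpol} was stated for the finite horizon thresholds $B^s_n, B^d_n$, so the argument must be repeated (trivially) for the stationary thresholds $B^s, B^d$ using the infinite horizon burden function $\Delta(\cdot,0)$; convergence of the finite horizon problem guaranteed by Theorem \ref{VerificationTheorem} ensures that the monotonicity and concavity properties of $\upsilon$ needed to define $B^s, B^d$ via $T_{\Delta(\cdot,0)}$ are indeed inherited in the limit.
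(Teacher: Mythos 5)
Your proposal is correct and follows exactly the paper's route: the paper also obtains Theorem \ref{vof_novalue} as an immediate consequence of Propositions \ref{optpol} and \ref{cntcombadm}, with the threshold ordering $B^s\geq B^d+1$ transferred to the stationary infinite-horizon thresholds via the limiting monotonicity and concavity properties guaranteed by Theorem \ref{VerificationTheorem}. Your explicit remark about carrying Proposition \ref{optpol} over to the infinite horizon is a welcome clarification of a step the paper leaves implicit.
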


\section{The Average Reward Case}
\label{sec-Average}
In this section, we consider the objective of expected average reward
per unit time, and provide a sufficient condition under which a
long-run average reward optimal policy exists and is obtained as a limit of the discounted reward problems as the discount rate $\beta \downarrow 0$. This implies that the results on the structure of the optimal policy and the value
of service rate flexibility presented in the previous sections carry over to the average reward case.
In this section we do not make the assumption that $\Lambda+\beta=1$, since we consider sequences of values of $\beta$, keeping the remaining parameters fixed.

For each policy $\pi\in\Pi$, the long-run average expected net profit
given that the initial state is $(x,i)\in S$ is
\begin{equation}\label{avereward}
g^{\pi}(x,i)=\lim\sup_{t\rightarrow\infty}\frac{1}{t}\upsilon^{\pi}_t(x,i),\;(x,i)\in
S,\end{equation} where

\begin{eqnarray*} \lefteqn{\upsilon^{\pi}_{t}(x,i)=}\\ & &\Ex^{\pi}
\left [ \sum_{j=0}^{N_t} R\ {\bf{1}}(\Aad(T_j)=1)\right.\\ & &\left.-
\int_{0}^{t}[h(X(u))+c(\Aser(u))]du | X(0)=x, I(0)=i
\vphantom{\sum_{j=0}^{v_t}} \right ]
\end{eqnarray*}
denotes the expected net profit generated by the process $(X(t),I(t))$
up to time $t$ with initial state $(x,i)\in S$ and $N_t$ the number of
admission decisions made up to time $t$.  The optimal average expected
net profit is defined as
\begin{equation}\label{optavreward}
g^*(x,i)=\sup_{\pi\in\Pi}g^{\pi}(x,i),\;(x,i)\in S.\end{equation}

A policy $\pi^*$ is characterized as  average-reward optimal
 if $g^{\pi^*}(x,i)=g^*(x,i)$ for all $(x,i)\in S$.

As in Section \ref{subsec-DTMDP}, we transform the problem into an
equivalent model in discrete time using uniformization, where the
expected time between decision epochs is equal to
$\frac{1}{\Lambda}$. The resulting discrete time Markov decision process is
described by the following average reward optimality inequalities
\begin{eqnarray}
w(x,1)&\leq& \max \left \{ R + w(x+1,0), w(x,0)\right \},\;x\geq0\label{AROI1} \\ \nonumber
w(x,0)&\leq& -\frac{h(x)}{\Lambda} - \frac{g}{\Lambda}   +
\frac{\lambda}{\Lambda}  w(x,1)   \\
&&  + \max_{j=1,\ldots,k} \{ -\frac{c}{\Lambda}+\frac{\mu_j}{\Lambda} w(x-1,0) + \frac{\mu_k-\mu_j}{\Lambda} w(x,0)\}
,\;x>0\label{AROI2}\\
w(0,0)&\leq& -\frac{h(x)}{\Lambda}-\frac{g}{\Lambda} -\frac{c_1}{\Lambda}+
\frac{\lambda}{\Lambda} w(0,1) + \frac{\mu_k}{\Lambda} w(0,0) .\label{AROI3}
\end{eqnarray}
with respect to a constant $g$ and a real-valued function $w$ on $S$.
These correspond to  the  discrete-time discounted optimality equations \eqref{DTMDP1}-\eqref{DTMDP-int}.
Note that as the continuous discount rate $\beta \downarrow 0$, the equivalent discrete-time discount factor $\frac{\Lambda}{\Lambda+\beta} \uparrow 1$.

Theorem 7.2.3. of \cite{sennott}, provides a set of sufficient
conditions (SEN assumptions, p.135) so that: (a) a solution $(w,g)$ to
\eqref{AROI1}-\eqref{AROI3} exists, (b) a
long run average reward optimal policy exists, which realizes the
maximum in \eqref{AROI1}-\eqref{AROI3} and  is obtained as the limit
of a sequence of discounted optimal policies under a subsequence of
discount rates $\beta_n\downarrow0$, (c) $g$ is equal to the optimal
average net profit and  is obtained as a limit of the optimal
discounted expected net profit for $\beta\downarrow0$ and (d)
$w(x,i)$ is a limit function of the sequence
$w_{\beta_n}=\upsilon_{\beta_n}(x,i)-\upsilon_{\beta_n}(0,0)$ with
$\beta_n\downarrow0$.

In the following theorem we prove that Assumption 1, which was shown
in Theorem \ref{VerificationTheorem} to be sufficient for the
existence of a solution to the discounted problem, also ensures that
the average reward problem has an optimal solution.

\begin{theorem}\label{estaveropt}
If the holding cost rate function satisfies Assumption 1, then
\begin{itemize}
\item[{\bf{i.}}] There exists a stationary long-run average reward optimal
policy $\pi^*$, which is a limit point of a sequence of stationary
discounted expected net profit optimal policies, i.e.,
$$
\pi^*=\lim_{n\rightarrow\infty}\pi_{\beta_n},
$$
where $\{\beta_n, n \geq 1\}$ is any sequence of
discount rates such that $\beta_n\downarrow0$ and
$\pi_{\beta_n}$ is a $\beta_n$-discount optimal stationary policy.

\item[{\bf{ii.}}]
The expected average net profit associated with
$\pi^*$ is equal to
\begin{equation}
\label{optavrew}
g^*=\lim_{\beta\downarrow0}\beta\upsilon_{\beta}(x,i),
\end{equation} for every $(x,i)\in S$.

\item[{\bf{iii.}}]
For any sequence  $\beta_n\downarrow0$ in (i), the sequence of functions $\{w_{\beta_n}, n\geq 1\}$ defined by
$$
w_{\beta_n}(x,i)=\upsilon_{\beta_n}(x,i)-\upsilon_{\beta_n}(0,0)
$$
converges pointwise to a function $w$ such that
$(w,g^*)$ satisfy   \eqref{AROI1}-\eqref{AROI3}.

\end{itemize}
\end{theorem}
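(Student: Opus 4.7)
The plan is to verify the hypotheses of Theorem 7.2.3 of \cite{sennott}, whose conclusions yield items (i)--(iii) simultaneously. After uniformization, with reference state $(0,0)$ and $w_\beta(x,i):=\upsilon_\beta(x,i)-\upsilon_\beta(0,0)$, the SEN conditions for a reward-maximization MDP amount to: (a) finiteness of $\upsilon_\beta$ for every $\beta>0$; (b) a uniform upper bound $w_\beta(x,i)\leq L$ over all $(x,i)$ and $\beta$; and (c) for each $(x,i)$, a lower bound $w_\beta(x,i)\geq -M(x,i)$ uniform in $\beta$. Part (a) is exactly Theorem \ref{VerificationTheorem} under Assumption~1.

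For (b), the plan is to use the monotonicity from Lemma \ref{burdenprp1} together with the optimality equation \eqref{DTMDPu1} at state $(0,1)$: $\upsilon_\beta(x,i)\leq\upsilon_\beta(0,i)\leq R+\upsilon_\beta(0,0)$, so $L=R$ works.

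The delicate step is (c). The approach is to exhibit one concrete feasible policy whose value is easy to bound from below. Let $\pi^0$ reject every arrival and serve at rate $\mu_h$ until the queue empties, then switch to the optimal policy from $(0,0)$. Under $\pi^0$ the queue length is monotone nonincreasing and bounded by $x$; the hitting time $\tau$ of $(0,0)$ is a sum of $x$ independent $\mathrm{Exp}(\mu_h)$ variables with $\Ex[\tau]=x/\mu_h$; and only the holding cost plus the overhead $c$ accrue along $[0,\tau]$. A strong-Markov decomposition gives
\[
\upsilon_\beta(x,i)\;\geq\; -(h(x)+c)\,\Ex[\tau]\;+\;\Ex[e^{-\beta\tau}]\,\upsilon_\beta(0,0).
\]
Combining the elementary estimate $1-\Ex[e^{-\beta\tau}]\leq\beta\,\Ex[\tau]$ with the nonnegativity $\upsilon_\beta(0,0)\geq 0$ (take the ``always reject'' policy from the empty state, which yields $0$ under the normalization $c_l=h(0)=0$) produces
\[
w_\beta(x,i)\;\geq\;-\tfrac{x}{\mu_h}\bigl[h(x)+c\bigr]\;-\;\tfrac{x}{\mu_h}\bigl[\beta\,\upsilon_\beta(0,0)\bigr].
\]
It remains to control $\beta\upsilon_\beta(0,0)$ uniformly in $\beta$. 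This is immediate from the crude majorization obtained by ignoring all costs and admitting every arrival: $\upsilon_\beta(0,0)\leq R\,\Ex\bigl[\sum_{j\geq 1}e^{-\beta T_j}\bigr]=R\lambda/\beta$, so $\beta\upsilon_\beta(0,0)\leq R\lambda$. This yields $M(x,i)=x[h(x)+c+R\lambda]/\mu_h$, establishing (c).

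With (a)--(c) in hand, Theorem 7.2.3 of \cite{sennott} delivers a subsequence $\beta_n\downarrow 0$ along which the stationary discount-optimal policies $\pi_{\beta_n}$ converge to a stationary $\pi^*$, the functions $w_{\beta_n}$ converge pointwise to some $w$, and $\beta_n\upsilon_{\beta_n}(x,i)\to g^*$ with $(w,g^*)$ satisfying \eqref{AROI1}--\eqref{AROI3} and $\pi^*$ realizing the maxima. These are exactly (i)--(iii). The main obstacle is precisely step (c): $\upsilon_\beta(0,0)$ itself diverges as $\beta\downarrow 0$, so the drain-cost bound must be arranged so that only the product $\beta\upsilon_\beta(0,0)$ appears, which is in turn controlled by a purely reward-side estimate independent of the particular form of $h$.
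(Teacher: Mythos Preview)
Your proof is correct and reaches the same destination (Theorem 7.2.3 of \cite{sennott}) by a genuinely different road. The paper does not verify the SEN bounds directly; instead it invokes Corollary~7.5.4 of \cite{sennott}, reducing everything to the existence of a $0$-standard policy. It then builds such a policy by \emph{admitting} customers with a small probability $p$ and serving at the \emph{low} rate $\mu_l$, obtaining a stable $M/M/1$ chain; the finiteness of the first-passage holding cost is then established via a Lyapunov function $W(x)=M\theta^x$, which is precisely where the growth bound of Assumption~1(i) enters. Your route is more elementary: by \emph{rejecting} everyone and serving at the \emph{high} rate you make the queue monotone, so the hitting time is an explicit Erlang and the drain cost is at most $(h(x)+c)x/\mu_h$ with no Lyapunov argument needed; the only additional ingredient is the crude reward-side bound $\beta\upsilon_\beta(0,0)\le R\lambda$ to absorb the $(1-\Ex[e^{-\beta\tau}])\upsilon_\beta(0,0)$ term. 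The trade-off is that the paper's $0$-standard-policy recipe is a transferable template (it works whenever some ergodic policy with controllable cost exists), whereas your drain argument exploits the specific feature that admission can be shut off entirely, making the first-passage analysis trivial. For this particular model your argument is shorter and, notably, does not use Assumption~1(i) in step~(c) at all---Assumption~1 enters only through Theorem~\ref{VerificationTheorem} to secure finiteness and the monotonicity needed for~(b).
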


\begin{proof} From Theorem 7.2.3. of \cite{sennott} it is sufficient
to verify the SEN assumptions.
In our problem the  state space $S$ is countable
and from Theorem \ref{VerificationTheorem} and Lemma \ref{burdenprp1}
the value function $\upsilon(x,i)$ of the discounted problem is
nonincreasing in $x\in\mbox{I\!N}_0$ for any $i\in\{0,1\}$. Thus,  we can
apply Corollary 7.5.4 of \cite{sennott}, which states that a
sufficient condition for SEN assumptions to hold in this case is the
existence of a $0$-standard policy, i.e. a (generally randomized) policy $d$ which
induces an irreducible and positive recurrent Markov process with finite
expected first passage time  from any state $s$ to state ${\bf{0}}$,
$m_{s{\bf{0}}} < \infty, s \in S$ and
expected first passage profit from any state $s$ to state ${\bf{0}}$,
$w_{s{\bf{0}}}>-\infty,\;x>0$.

To show the existence of a 0-standard policy for any $p \in [0,1]$,
let $d(p)$ be the randomized policy under which the service rate is
always set to $\mu_1$ and arriving customers are admitted with
probability $p$. If $p$ is such that $\frac{\lambda p}{\mu_1}<1$, then
under policy $d(p)$ the system is equivalent to a stable $M/M/1$ queue
$\{X_t , t\geq 0\}$, with state $X_t$ denoting the number of customers
in the system, arrival rate $\lambda p$, service rate $\mu_1$,
admission reward $R$ and cost rate $h(x)$ while in state $x$.

The discrete-time equivalent of this process corresponds to a positive recurrent Markov chain
$\{X_n, n=0,1,\ldots\}$, with transition probabilities
$$
P_{x, x+1} =\frac{\lambda p}{\Lambda},
P_{x, x-1} =\frac{\mu_1}{\Lambda},
P_{x, x} =1- \frac{\lambda p +\mu_1}{\Lambda}, x>0,
$$
$$
P_{01}=1-P_{00}= \frac{\lambda p}{\Lambda},
$$
and one-step rewards $r(x)=\frac{R \lambda p -h(x)}{\Lambda}$.

Let $N=\min\{n > 0 : X_n=0\}$ denote the first passage time to state
0. It is well known that the expected first passage times to state 0
in the continuous-time M/M/1 queue are equal to $\frac{x}{\mu_1 -
  \lambda p}, x>0$, thus in the discrete time model
$$
m_{x0} \equiv E(N|X_0=x) =  \frac{\Lambda x}{\mu_1 - \lambda p} < \infty.
$$

Now consider the first passage expected  profit
$$
w_{x0} = E \left ( \sum_{n=0}^T r(X_n) | \ X(0)=x \right ) =
\frac{R \lambda p}{\Lambda} m_{x0} - \frac{1}{\Lambda} E \left ( \sum_{n=0}^T h(X_n) | \ X(0)=x \right ) .
$$

Thus, to show that $w_{x0} > -\infty$, it suffices to show that the expected
first passage holding cost is finite, i.e.,
$$
H_{x} \equiv E \left ( \sum_{n=0}^T h(X_n) | \ X(0)=x \right ) < \infty, \ \ x > 0.
$$

From Corollary C.2.4 of \cite{sennott} it follows that in order to show $H_x < \infty$, it is sufficient to establish that there exists a nonnegative finite function $W(x)$ such that
\begin{eqnarray}
  \label{eq:wbound0}
  \sum_jP_{0j}W(j) &<& \infty,\\
  \label{eq:wboundx}
   \sum_j P_{xj} (W(x) - W(j)) &\geq& h(x),\ \  x>0.
\end{eqnarray}

We will prove that there exist a sufficiently small $p$ and a
sufficiently large $M>0$ such that these inequalities are satisfied by
function $W(x) = M \theta^x$, where $\theta>1$ is the constant
appearing in Assumption 1. First, \eqref{eq:wbound0} is immediate,
since $\sum_jP_{0j}W(j) = M \frac{\lambda p}{\Lambda} \theta$.  For
$x>0$,
\[
   \sum_j P_{xj} (W(x) - W(j))=
\frac{\lambda p}{\Lambda} M(\theta^{x}-\theta^{x+1}) + \frac{\mu_1}{\Lambda} M(\theta^x - \theta^{x-1}) \\
=
M \theta^{x-1} (\theta -1) \frac{\mu_1 - \lambda p \theta}{\Lambda}.
\]
On the other hand, from (c1) of Assumption 1, it follows that $h(x) \leq h(1) \theta^{x-1}, x>0$. Therefore,
if we take $p < \min(\frac{\mu_1}{\lambda p}, 1)$, so that $\mu_l - \lambda p >0$ and
$M\geq \frac{h(1) \Lambda}{(\mu_1 - \lambda p)(\theta-1)}$, then it is true that
\[
\sum_j P_{xj} (W(x) - W(j)) \geq h(1) \theta^{x-1} \geq h(x), \ \ x>0.
\]

Summarizing, we have shown that for sufficiently small $p$ there
exists a function $W(x)$ satisfying \eqref{eq:wbound0},
\eqref{eq:wboundx}, thus  policy $d(p)$ is 0 standard  and
the proof of the theorem is complete.
\end{proof}

From Corollary 7.5.4 of \cite{sennott}, we also obtain that the
relative value function $w(x,i)$ is nonnegative and nonincreasing in
$x$ for any $i$.

The results on the structure of the discounted optimal policy are
extended to the average reward case, since the average optimal policy
is obtained as a limit of a sequence of discounted optimal policies and the action set is finite.

Specifically, from Theorem \ref{estaveropt} there exist admission and service thresholds in both problems which are obtained as limits for any sequence $\beta_n\downarrow0$ of the corresponding threshold values in the infinite horizon discounted versions of the joint control problem and the admission control subproblem. For the following, we let $B^{s,av}_j$ be the service thresholds for $j=1,\ldots,k$ and $B^{d,av}$ be the admission threshold of the joint problem under average reward criterion, as well as $\hat{B}^{d,av}$ be the corresponding admission threshold in the admission control subproblem under the average reward criterion.

Regarding the value of service flexibility, since the optimal average reward does not depend on the initial state, we can obtain more concrete results. Specifically, we define the service rate flexibility as $\hat{\epsilon}=g-\hat{g}$, i.e. the effect on the average reward, where $g,\;\hat{g}$ denote the optimal average reward of the joint problem and the admission control subproblem, respectively.

We can now prove the following result, which is the analog of Theorem \ref{vof_novalue} in the average case.

\begin{theorem}\label{vof_av}
If $R\leq\zeta_2$, then $\hat{B}^{d,av}=B^{d,av}$, and $\hat{\epsilon}=0$.
\end{theorem}

\begin{proof}
Consider any sequence $\beta_n\downarrow0$ and we denote explicitly the dependence of the discounted joint problem thresholds $B^{s}_j(\beta_n),\;B^{d}(\beta_n)$, as well as of the admission threshold  $\hat{B}^{d}(\beta_n)$ in the admission control subproblem.

If $R\leq\zeta_2$, it follows from Theorem \ref{vof_novalue} that for any $n$
\begin{equation}B^{d}(\beta_n)=\hat{B}^{d}(\beta_n)\label{admthr_bn}\end{equation}
and \begin{equation}\label{vof_bn}\upsilon_{\beta_n}(x,i)=\hat{\upsilon}_{\beta_n}(x,i) \mbox{ for any } x=0,1,\ldots,B^{s}_1(\beta_n),\;i=0,1,\end{equation}
since $\hat{\epsilon}(x,i)=0$ for any $x=0,1,\ldots,B^{s}_1(\beta_n)$ and $i=0,1$.

Taking limits in \eqref{admthr_bn} with $n\rightarrow\infty$, we derive that $\hat{B}^{d,av}=B^{d,av}.$

Also, since $B^s_1(\beta_n)\geq0$, it follows from \eqref{vof_bn} that $\upsilon_{\beta_n}(0,0)=\hat{\upsilon}_{\beta_n}(0,0)$ for any $n$, thus, from Theorem \ref{estaveropt} \it{ii.},
\begin{equation} \lim_{n\rightarrow\infty}\beta_n \upsilon_{\beta_n}(0,0)= \lim_{n\rightarrow\infty}\beta_n \hat{\upsilon}_{\beta_n}(0,0).\label{limbn}\end{equation}
Since \eqref{optavrew} holds for every state $(x,i)$, thus also for (0,0), it follows from \eqref{limbn} that $\hat{\epsilon}=0$.
\end{proof}

\section{Reward Collected at Departure Epochs}
\label{Rdepart}

In the previous sections it was assumed that the reward $R$ is
collected upon admitting a customer. While this is plausible in many
situations such as ticket-based operations or call centers with
upfront charge, it is also often the case that the service reward is
collected at the time of departure of the customer, for example in
jobshops with payment upon delivery. In this section we formulate the
corresponding MDP model for the second case, point out the
similarities and differences and show that essentially all the
conclusions obtained so far still hold.

When $R$ is collected at departure epochs, the Markov Decision Process
in finite horizon corresponding to \eqref{DTMDP1}-\eqref{DTMDP-int} now takes the following form:
\begin{eqnarray}
\upsilon_{n+1}(x,1)&=& \max\{\upsilon_{n+1}(x+1,0),\upsilon_{n+1}(x,0)\},\;x\geq0,\label{DTMDP1d} \\
\nonumber\upsilon_{n+1}(x,0)&=&-h(x)+\lambda\upsilon_{n}(x,1)\\
&&+\max_{j=1,\dots,k}\{-c_j+\mu_j(R+\upsilon_{n}(x-1,0))+(\mu_k-\mu_j)\upsilon_{n}(x,0)\},\;x>0,\label{DTMDP2d}\\
\upsilon_{n+1}(0,0)&=&-h(0)-c_1+\lambda\upsilon_{n}(0,1)+\mu_k\upsilon_{n}(0,0),\label{DTMDP3d}\\
\upsilon_{0}(x,i)&=&0,\;x\geq0,\;i\in\{0,1\}.\label{DTMDP-intd}
\end{eqnarray}
Note that in \eqref{DTMDP1d}-\eqref{DTMDP-intd} the term for $R$ is
added at transitions from $(x,0)$ to $(x-1,0)$, whereas at admission
epochs there is no reward collected.

As in the original model, let $\Delta_n(x,i)=\upsilon_n(x,i)-\upsilon_n(x+1,i)$.

One and, in essence, the only difference between the two models is
that Lemma \ref{burdenprp1} is not true anymore, i.e., the value
function is not nonincreasing in $x$. This is intuitively expected,
since an additional customer in the queue brings with him the prospect
of a future reward as well as a burden due to the higher holding
costs. Mathematically, the induction proof of Lemma \ref{burdenprp1}
changes at state $(0,0)$.  Indeed, it is now true that
$$
\Delta_{n+1}(0,0)=\Delta h(0) +
\lambda\Delta_{n}(0,1)+\min_{j=1,\ldots,k}\{c_j-c_1-\mu_j R+(\mu_k-\mu_j)\Delta_{n}(0,1)\},
$$
from which it follows that $\Delta_{n+1}(0,0)\geq -R$, which can be generalized by induction, i.e. $\Delta_{n+1}(x,0)\geq -R$ for any $x$.

Since it is not generally true that $\Delta_n(x,i)\geq 0$, the difference
cannot be interpreted as a burden, but rather as the net effect of an
additional customer, which can be either a burden or a benefit.

On the other hand, by following the remaining proofs in the original
model, it can be verified that all the results on the monotonicity of
$\Delta_n(x,0)$ in $x$ and the threshold structure of the optimal policy still hold.

The admission and service rate thresholds now take the form
\begin{equation*}
          B^s_{n,j}=1+T_{\Delta_n(\cdot,0)}(\zeta_{j+1}-R),\mbox{ for }j=0,\ldots,k \ \
B^d_{n}=T_{\Delta_n(\cdot,0)}(0),
\end{equation*}
where $\Delta_n$ corresponds to the new net effect function and
$T_f(\theta)$ is the same generic threshold function defined in
\eqref{thresgenfcn}.

Moreover, the same value of $j_0$, as in Section \ref{subsec-threspol}, i.e. $j_0=\min\{j\geq1:\; R\leq\zeta_{j+1}\}$, satisfies the analog of Proposition \ref{optpol} for the new thresholds. This implies that the distinction between useful and useless service rates does not depend on whether the payment is made before or after service.

Regarding the results on the value of flexibility, with the given changes in the optimality equations of the joint control problem, which are similar in the corresponding control subproblem, all the proofs still hold. Specifically, letting $\hat{\epsilon}_n (x,i)=\upsilon_n (x,i)-\hat{\upsilon}_n (x,i)$, for any $(x,i)$ be the value of service flexibility, as in Section \ref{sec-VoSF}, the following results hold.
\begin{enumerate}
\item[i.] $\hat{\epsilon}_n (x,i)$ is nondecreasing in $x$ for all $i$, for any $n$.
\item[ii.] $\hat{B}^d_n\leq B^d_n$ for any $n$.
\item[iii.] If $R\leq\zeta_2$, then $\hat{B}^d\leq B^d$ and $\hat{\epsilon}(x,i)=0$ for $i=0,1$ and $x=0,1,\ldots,B^s_1$, where $B^s_1, B^d, \hat{B}^d$ are the service and the admission thresholds in the infinite horizon discounted version of the joint and the admission control subproblem, and $\hat{\epsilon}(x,i)$ refers to the value of flexibility derived by the infinite horizon counterparts of the corresponding value functions.
\end{enumerate}

\section{Computational Results}
\label{sec-Numerical}
In this section we present the results of some computational
experiments, which explore the value of service rate flexibility and
the influence of the service rate switch option on the optimal
admission policy for the discounted profit maximization problem with service payment before and after service and the average profit
maximization problem with service payment upon customers admission.

We first consider the behavior of the optimal policy, the comparison of admission thresholds and the value of service flexibility as a function of the service reward $R$. We consider a system with input rate $\lambda=10$, available service rates $\mu_1=4,\;\mu_2=8,\;\mu_3=12$ and $\mu_4=16$
with corresponding cost rates $c_1=0,\;c_2=2,\;c_3=6\;,c_4=14$, holding cost rate $h(x)=0.1 x^2$ discount rate $\beta=1$. The corresponding values of the relative increase in cost rate with respect to the service rate are $\zeta_1=0.5,\;\zeta_2=1,\;\zeta_3=1.5$ and $\zeta_4=2$. For the computations we truncate the state space up to $x=20$. Note that the parameters do not
generally satisfy the normalization assumption, $\Lambda+\beta=1$.
However, the required rescaling has been performed in the computations.

These results are presented in
Figures \ref{serflexdcbef}, \ref{serflexdcaft} and \ref{serflexavbef}. Figures \ref{serflexdcbef} and \ref{serflexdcaft} correspond to the discounted profit case with payment before and after service, respectively, whereas Figure \ref{serflexavbef} to the average reward case. In these diagrams, we observe that the service
thresholds $B_j^s$ are nonincreasing in $R$ ($B^s_4$ appears constant at $20$ because of the state space truncation), while both admission thresholds
$B^d$ and $\hat{B}^d$ are nondecreasing. This behavior is intuitive because as $R$ increases admitting customers becomes more profitable and the reward to cost relationship motivates higher service rates to be employed more often. Moreover, the threshold diagrams in all cases are consistent with the results of Proposition \ref{optpol}, i.e. when $R\leq\zeta_{i+1}$ then $B_i^s\geq B^d+1$.

Considering the value of flexibility in each case, we observe that it is nondecreasing in $R$, which is also intuitive. Furthermore, in the previous sections it was established
analytically that when $R\leq\zeta_2$ the service rate
flexibility is essentially of no value. This was manifested in three different ways. Specifically, it was proved that $R\leq\zeta_2$ implies
$B_1^s\geq B^d+1$, i.e. that all service rates higher than $\mu_1$ are essentially not used. This in turn implies that the admission thresholds in the
joint and the admission control problem coincide, and that the value of service flexibility is equal to zero.

However, even if $R>\zeta_2$, it may still be true that $B_1^s\geq B^d+1$, and even if $B_1^s< B^d+1$ it may also be true that
$\hat{B}^{d}=B^d$. Indeed, we observe in Figures \ref{serflexdcbef} to \ref{serflexavbef} that there exists a small interval of values of $R>\zeta_2$ such that all the above occur.

Thus, it is natural to ask to what extent the condition $R\leq\zeta_2$ is a good indicator for zero value of the service flexibility. In order to do this, for varying of $\zeta_2$ we can numerically identify two critical values $\tilde{R}^s$ and $\tilde{R}^d$ as follows. $\tilde{R}^s$ is the minimum value of $R$ which makes $B_1^s< B^d+1$ and $\tilde{R}^d$ is the minimum value of $R$ which makes $\hat{B}^{d}< B^d$. From the previous discussion it follows that $\zeta_2\leq\tilde{R}^s\leq\tilde{R}^d$. The condition $R>\zeta_2$ is a good approximation to characterize the value of service flexibility to the extent that these values are close to each other.

To examine this issue, we perform another numerical
experiment for the discounted case with payment before service completion. We consider only two available service rates $\mu_1=3,\;\mu_2=5$, with arrival rate $\lambda=5$, holding cost $h(x)=x^2$ and $\beta=0.5$. We vary $\zeta_2$ from $0$ to $5$, by varying the cost rate $c_2$ keeping $c_1=0$. In Figure \ref{threscombdg}, we present the curves of $\tilde{R}^s$ and $\tilde{R}^d$ as functions of $\zeta_2$. The curve of $\tilde{R}^s$ is indeed above the
diagonal $R=\zeta_2$, however only slightly, whereas $\tilde{R}^d$ may be significantly higher than $\zeta_2$ and $\tilde{R}^s$. This shows that
the condition $R>\zeta_2$ is a good approximation in order to show when a higher service rate is employed whereas this is not the case to
characterize the effect of service rate switch to a higher value on admission thresholds. Thus, there exist cases where the system manager is motivated to switch to a higher service rate even though he accepts the same number of customers, in order to decrease the holding costs.

We finally consider the interaction between the timing of service payment and the value of service flexibility. In Figure \ref{serflexmhdg} we present the optimal profits of the joint control discounted models where service payment is made before and after service completion, as well as the values of the corresponding service flexibilities for the same parameter values as in the first experiment. When the service revenue is collected at customer departure, its effective value upon admission is discounted by the sojourn time of the customer in the system, thus the optimal profit is lower when payment is made on customer departure. Therefore, in this case the service rate switch option is even more useful because, in addition to decreasing the holding costs, it also increases the value of the service payment by reducing the sojourn time. Indeed, the service rate flexibility is higher for payment after the service completion.

\begin{figure}
\centering
\includegraphics[angle=0, width=0.8\textwidth]{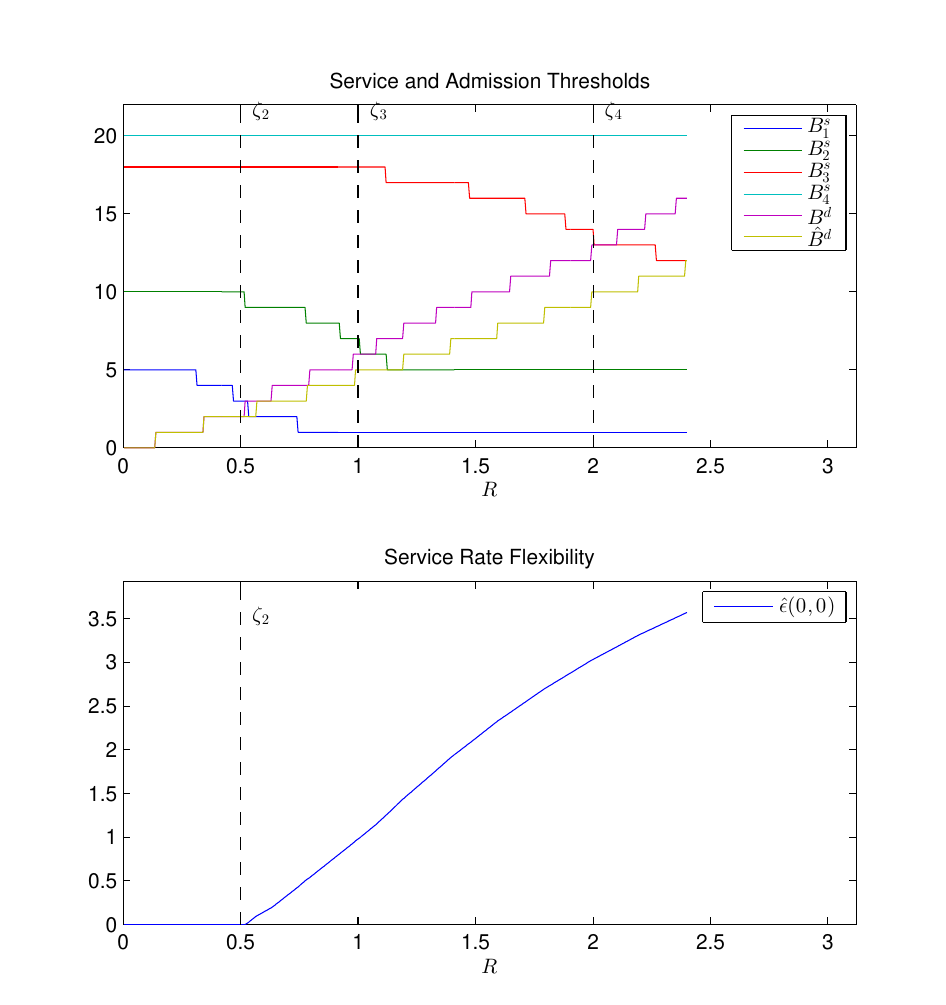}
\caption{The admission and service thresholds and the value of service flexibility as a function of $R$ in the discounted case when service payment is made before service completion for $\lambda=10,\;\mu_1=4,\mu_2=8,\mu_3=12,\mu_4=16\;c_1=0,c_2=2,c_3=6,c_4=14,\;h(x)=0.1x^2,\;\beta=1$} \label{serflexdcbef}
\end{figure}

\begin{figure}[H]
\centering
\includegraphics[angle=0, width=0.8\textwidth]{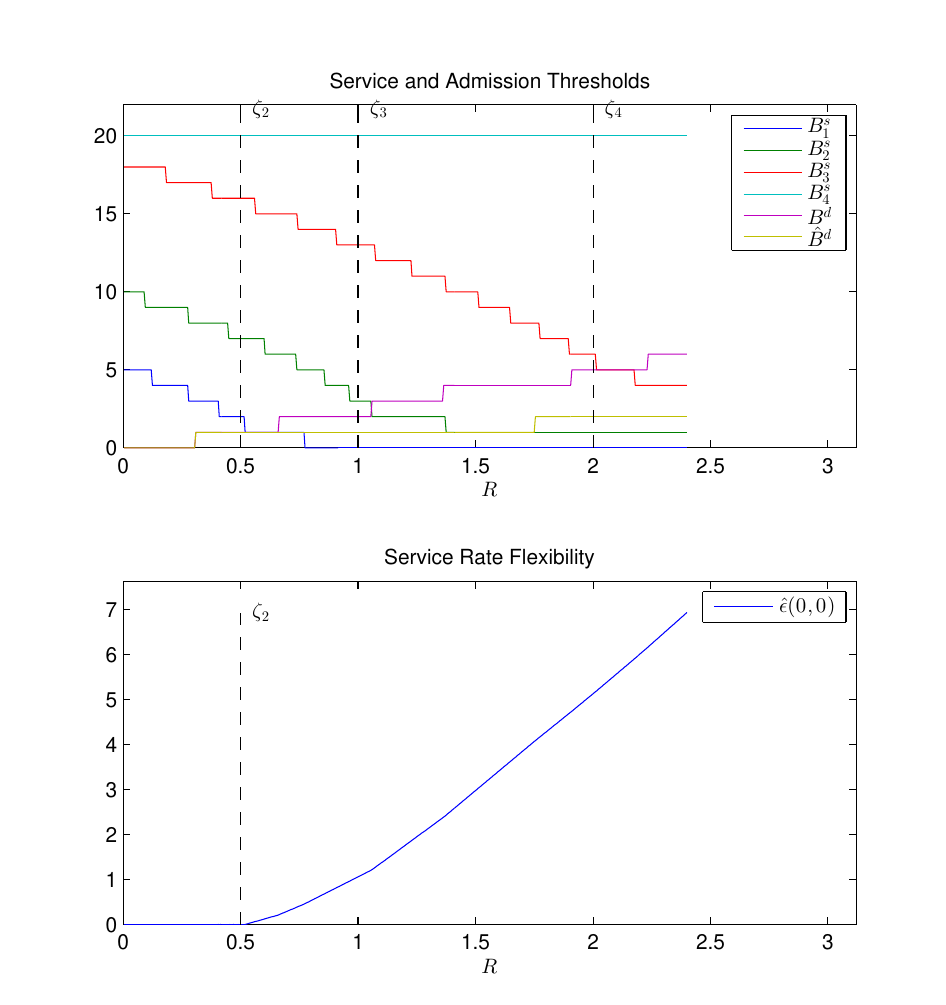}
\caption{The admission and service thresholds and the value of service flexibility as a function of $R$ in the discounted case when service payment is made after service completion for $\lambda=10,\;\mu_1=4,\mu_2=8,\mu_3=12,\mu_4=16\;c_1=0,c_2=2,c_3=6,c_4=14,\;h(x)=0.1x^2,\;\beta=1$} \label{serflexdcaft}
\end{figure}

\begin{figure}[H]
\centering
\includegraphics[angle=0, width=0.8\textwidth]{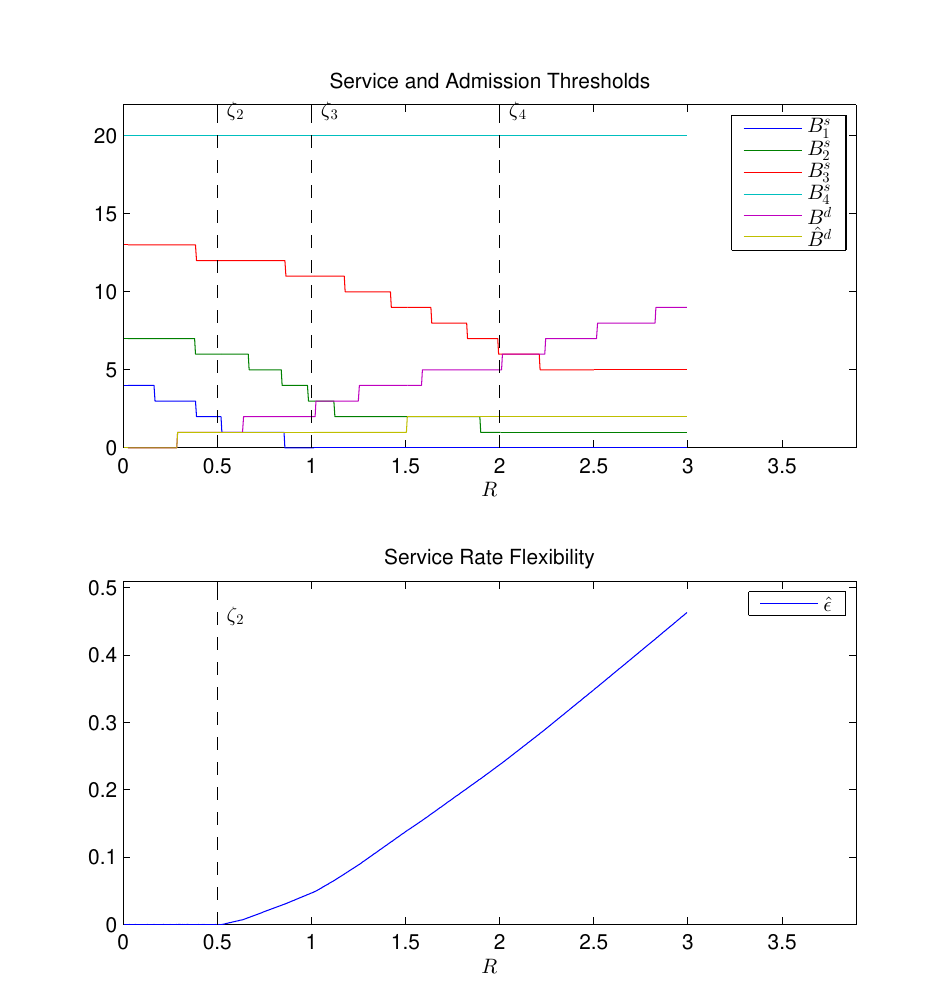}
\caption{The admission and service thresholds and the value of service flexibility as a function of $R$ in the average case when service payment is made before service completion for $\lambda=10,\;\mu_1=4,\mu_2=8,\mu_3=12,\mu_4=16\;c_1=0,c_2=2,c_3=6,c_4=14,\;h(x)=0.1x^2$} \label{serflexavbef}
\end{figure}

\begin{figure}[H]
\centering
\includegraphics[width=0.8\textwidth]{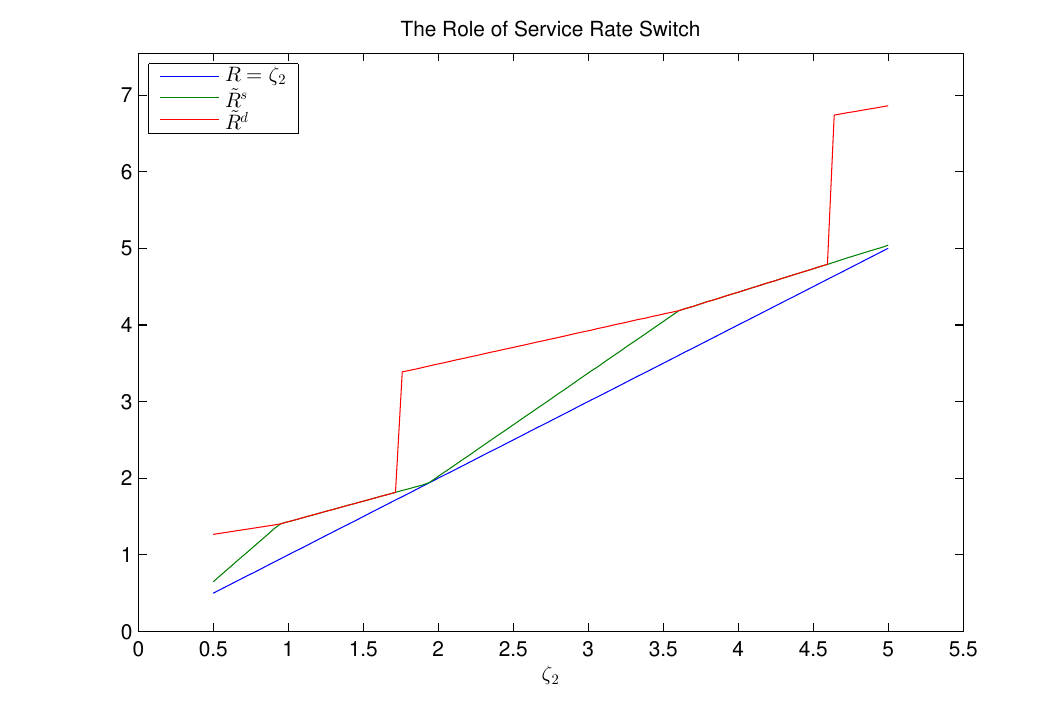}
\caption{Critical values $\tilde{R}^s$ and $\tilde{R}^d$ as a function of $\zeta_2$ for
  $\lambda=5,\;\mu_l=3,\;\mu_2=5,\;h(x)=x^2,\;\beta=0.5$}
\label{threscombdg}
\end{figure}

\begin{figure}[H]
\centering
\includegraphics[width=0.8\textwidth]{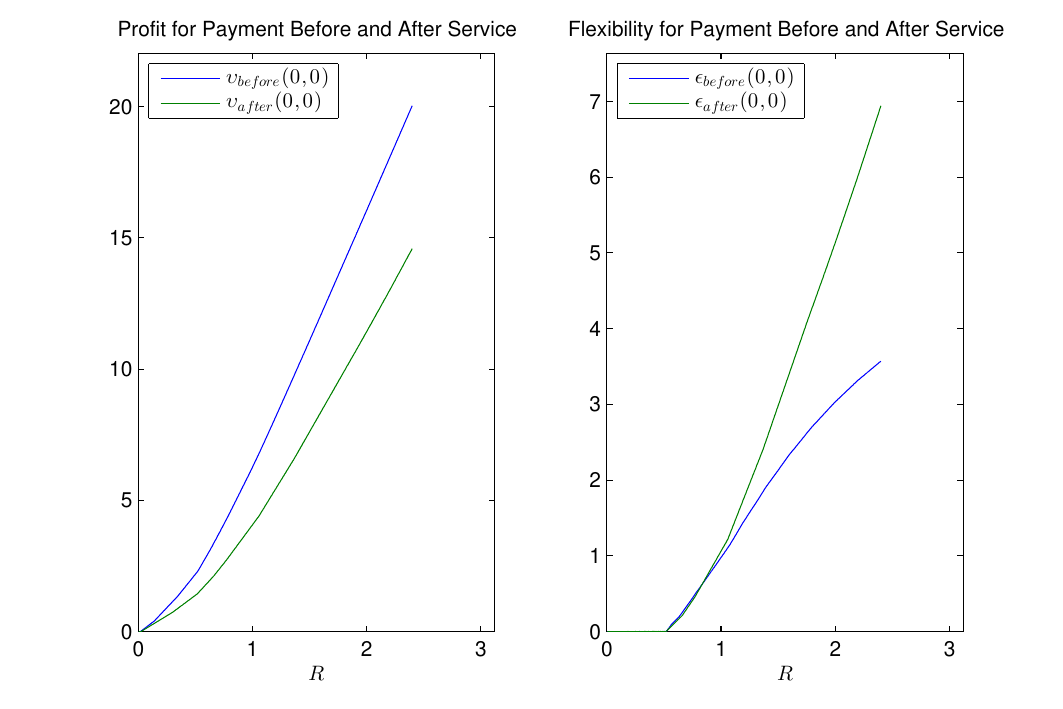}
\caption{Optimal profits and value of service flexibility as a function of $R$ in the discounted case for the joint control models
 where service payment is made before and after service completion for
  $\lambda=10,\;\mu_1=4,\mu_2=8,\mu_3=12,\mu_4=16\;c_1=0,c_2=2,c_3=6,c_4=14,\;h(x)=0.1x^2,\;\beta=1$} \label{serflexmhdg}
\end{figure}

\section{Conclusions and Extensions}
\label{sec-Conclusions} In this paper we analyzed the problem of joint
dynamic admission and service control in an $M/M/1$ queue under
expected discounted and average profit maximization.  We established a threshold
structure for the optimal service rate-admission control policy.  We
defined the value of the service rate flexibility as the benefit that
the option to switch to a higher service rate brings to a system with pure admission
control, and showed that the value of flexibility is nondecreasing with
system congestion.  We finally identified a simple condition between
the admission reward and the relative costs of high service rates, under
which the admission policy is not affected and the value of service
flexibility is zero in low congestion states for the discounted problem. For the average reward problem this leads to
a stronger result, that under the same condition, the option to switch to a higher service rate does not affect the average profit.
\\
The analysis in this paper was conducted in the context of an $M/M/1$
queue, where a higher service rate means that the server operates at
higher speed. However in many real applications such as banks or call
centers the service capacity is affected by dynamically varying the
number of operating servers. This corresponds to an $M/M/m$ queue with
$m$ being determined by a dynamic policy.  We conjecture that the
admission policy will still be threshold-based. On the other hand, it
would be interesting to study how the service rate policy is
structured under different assumptions on the service rate switch
option, e.g. assuming that all servers are required to use the same
service rate, or that each server is allowed to select its own rate.
\\
Furthermore, the assumption of identical customers may be relaxed by assuming multiple
customer classes differentiated by admission reward and/or service
rate. If the service rates are identical among classes, it is expected
that the optimal admission policy is determined by class-dependent
admission thresholds which are increased when a higher service rate
option becomes available.
\\
It would also be interesting to consider
the value of service rate flexibility when there is a fixed service
rate switch cost, which brings hysteretic policies into play. However, the inclusion of a switching cost increases the complexity
of the problem significantly, because the state variable must include the service rate currently employed. Since we consider admission policies as well,
the framework of conjectured optimal policies may now be much wider, because in principle the admission threshold could depend on the current service rate.
\\
Finally, in the present model we considered maximization of net profit
from the point of view of a single decision maker, who may be the
system owner or a collective representative maximizing the total
customer benefit. It would be interesting to consider equilibrium
strategies in a game theoretic model where the server determines the
service rate and arriving customers respond by deciding individually
whether to join the queue or balk.  Such a model could also include
pricing as an additional policy component available to the server.

\centerline{{\bf Acknowledgment}}
This research was supported by the Hellenic Secretariat of Research and Technology via a Greece-Turkey bilateral research program.

\bibliographystyle{abbrvnat}

\end{document}